\theoremstyle{plain}   
\newtheorem{thm}{Theorem}[section]
\newtheorem{Theorem}[thm]{Theorem}
\newtheorem{Lemma}[thm]{Lemma}
\newtheorem{Corollary}[thm]{Corollary}
\newtheorem{Prop}[thm]{Proposition}
\theoremstyle{definition}  
\newtheorem{Remark}[thm]{Remark}
\newtheorem{example}[thm]{例}
\newtheorem{Example}[thm]{Example}
\theoremstyle{definition}  
\newtheorem{Definition}[thm]{Definition}
\newcommand{\KK}{\mathbb{K}}
\newcommand{\BBB}{\mathcal{B}}
\newcommand{\FFF}{\mathcal{F}}
\newcommand{\LLL}{\mathcal{L}}
\newcommand{\TTT}{\mathcal{T}}
\newcommand{\zero}{\boldsymbol{0}}
\newcommand{\Ann}{\operatorname{Ann}}
\newcommand{\rank}{\operatorname{rank}}
\DeclareSymbolFont{symbolsC}{U}{txsyc}{m}{n}
\DeclareMathSymbol{\MYPerp}{\mathrel}{symbolsC}{121}
\newbox{\MyTrashBox}
  \def\widebar{\accentset{{\cc@style\underline{\mskip10mu}}}}
\begin{document}

\title[Hessian matrices of the generating functions for trees]{The eigenvalues of the Hessian matrices of the generating functions for trees with $k$ components}
\author[A. Yazawa]{Akiko Yazawa}
\address[Akiko Yazawa]{Department of Science and Technology,
	Graduate School of Medicine, Science and Technology,
	Shinshu University,
	Matsumoto, Nagano, 390-8621, Japan}
\email{yazawa@math.shinshu-u.ac.jp}
\date{}
\thanks{This work was partly supported by the Sasakawa Scientific Research Grant from       The Japan Science Society. }
\maketitle


\begin{abstract}
Let us consider a truncated matroid $M_{\Gamma}^{r}$ of rank $r$ of a graphic matroid of a graph $\Gamma$. 
The basis for $M_{\Gamma}^{r}$ is the set of the forests with $r$ edges in $\Gamma$. 
We consider this basis generating function and compute its Hessian. 
In this paper, we show that the Hessian of the basis generating function of the truncated matroid of the graphic matroid of the complete or complete bipartite graph does not vanish by calculating the eigenvalues of the Hessian matrix. 
Moreover, we show that the Hessian matrix of the basis generating function of the truncated matroid of the graphic matroid of the complete or complete bipartite graph  has exactly one positive eigenvalue. 
As an application, we show the strong Lefschetz property for the Artinian Gorenstein algebra associated to the truncated matroid. 
\end{abstract}


\section{Introduction}\label{sec:Introduction}

Various applications of the strong Lefschetz property to other areas, e.g. combinatorics, representation theory and so on (see \cite{MR3112920} for details), has been found in the last two decades. 
Recently, algebras associated matroids are studied, e.g. \cite{MR3733101}, \cite{MR3566530}, \cite{MNY}, \cite{NY2019}, \cite{Y2018}. 

In \cite{MR3733101}, Huh and Wang defined a chow ring $A(M)$ associated to a loop-less matroid $M$ on a set $\bar E=\Set{0, 1, \ldots, n}$. 
They show that the ring $A(M)$ has the strong Lefschetz property in the narrow sense (see Definition \ref{SLP}).  
Moreover an element $L$ in $A^{1}(M)$ such that $L$ is strictly submodular function on the family of the subsets of $\bar E$ is a  strong Lefschetz element. 
They also defined algebra $B^*(M)$ associated to a matroid on a set $E=\Set{1, 2, \ldots, n}$ which is a subring of $A(M)$. 
They show that $B^*(M)$ has the ``injective'' Lefschetz property in the case where $M$ is representable. 
Moreover the element $L$ in  $B^1(M)$ such that all the coefficients of all variables are one is a strong Lefschetz element. 

In \cite{MR3566530}, Maeno and Numata defined algebras $Q/J_{M}$ and $A_{M}$ for a matroid $M$
to give an algebraic proof of the Sperner property for the lattice $\LLL(M)$ consisting of flats of $M$. 
The algebra $Q/J_{M}$ is isomorphic to the vector space with basis the set of flats of $M$ as vector spaces. 
The algebra $A_{M}$ is defined to be the quotient algebra of the ring of the differential polynomials by the annihilator of $F_{M}$ (the algebra $A_{M}$ is isomorphic to $B^*(M)$).  
They show that $Q/J_{M}$ has the strong Lefshetz property in the narrow sense if and only if the lattice $\LLL(M)$ is modular, 
and  that $Q/J_{M}=A_{M}$ if and only if $\LLL(M)$ is modular. 
They conjectured that the algebra $A_{M}$ has the strong Lefschetz property for an arbitrary matroid $M$ in an extended abstract \cite{MR2985388} of the paper \cite{MR3566530}. 

In general, a graded Artinian Gorenstein algebra has a representation $A=\KK[x_{1}, x_{2}, \ldots, x_{N}]/\Ann(\Phi)$, where $\Phi$ is a homogeneous polynomial (see Section \ref{sec:Application} for details). 
For a graded Artinian Gorenstein algebra, there is a criterion for the strong Lefschetz property. 
This uses the Hessian matrices (see Theorem \ref{criterion}). 
Roughly, a graded Artinian Gorenstein algebra has the strong Lefschetz property if and only if the Hessians (the determinant of the Hessian matrices) do not vanish. 
Hence the Hessian matrices and Hessians are important. 


In this paper, for the generating function for forests with $k$ components, 
we consider its Hessian matrix and Hessian. 
In \cite{Y2018}, for the generating function for forests with one components (it is called the Kirchhoff polynomial of the complete graph), its Hessian matrix and Hessian are computed. 
The Hessian matrix of the generating function for forests with one components has exactly one positive eigenvalue and its Hessian does not vanish. 
More general, in \cite{NY2019} and \cite{MNY}, the Hessian matrix has exactly one positive eigenvalue and its Hessian does not vanish for the generating function for any simple graphic matroid and any simple matroid, respectively. 
Our main theorem is that for the generating function for forests with $k$ components, 
its Hessian matrix and its Hessian are in the same situation. 
That is the Hessian matrix has exactly one positive eigenvalue and its Hessian does not vanish. 
We gives another proof of the theorem in the case of the truncated matroids of graphic matroids of the complete and complete bipartite graphs by directly calculation of the eigenvalues of the Hessian matrix in \cite{MNY}. 
See also Remark \ref{MNY}.

This paper is organized as follows: 
In Section \ref{sec:Main result}, we consider the generating function for the forests. 
Our main result is that the Hessian of some generating functions for forests does not vanish (Theorem \ref{main theorem} and \ref{main theorem 2}). 
In Section \ref{sec:Application}, we consider the strong Lefschetz property of an algebra associated to a matroid. 
We see a definition of a matroid and its example, and conclude that our main result gives applications to algebras associated to truncated matroids of graphic matroids of the complete and complete bipartite graphs. 

\section{Main result}\label{sec:Main result}
In this section, we show that the Hessian of some generating functions for forests does not vanish (Theorems \ref{main theorem} and \ref{main theorem 2}). 

A \textit{forest} is a graph without cycles. 
Note that a forest is a simple graph. 
For a finite set $V$, define
\begin{align*}
\binom{V}{2}=\Set{\Set{x, y}|x,y\in V, x\neq y}. 
\end{align*}
For a graph $\Gamma$, $V(\Gamma)$ and $E(\Gamma)$ are the set of vertices and edges of $\Gamma$, respectively. 
For a graph $\Gamma$ and an edge $e$, 
$\Gamma\cup e$ stands for a graph 
such that the vertex set is $V(\Gamma)\cup e$, 
and the edge set is $E(\Gamma)\cup \Set{e}$. 
For graphs $\Gamma$ and $\Gamma'$ where $V(\Gamma)\cap V(\Gamma')=\emptyset$, $\Gamma\sqcup \Gamma'$ stands for a graph such that the vertex set is $V(\Gamma)\cup V(\Gamma')$, 
and the edge set is $E(\Gamma)\cup E(\Gamma')$ . 

\subsection{The generating function for the forests in the complete graph}\label{subsec:The generating function for the forests in the complete graph}

For a finite set $V$, we define $\FFF_{V}^{k}$ to be the collection of the forests with the vertex set $V$ and $k$ components.  
We define \textit{the generating function $\Phi_{V, k}$ for $\FFF_{V}^{k}$} by
\begin{align*}
\Phi_{V, k}=\sum_{F\in \FFF_{V}^{k}}\prod_{e\in E(F)}x_{e}. 
\end{align*}

\begin{Remark}\label{the generating function for the forests}
Let $K_{V}$ be the complete graph with the vertex set $V$. 
The set $\FFF_{V}^{k}$ can be regarded as the set of subgraphs of $K_{V}$ with $k$ components.  
A forest with $r$ edges in the complete graph $K_{n}=K_{\Set{1, 2, \ldots, n}}$ is a forest consisting of $n-r$ components.  
For $k=1$, 
an element in $\FFF_{V}^{k}$ is called a \textit{spanning tree} in $K_{V}$, and
the generating function $\Phi_{V, k}$ is called \textit{the Kirchhoff polynomial} of $K_{V}$.  
\end{Remark}

\begin{Example}
Consider $V=\Set{1, 2, 3, 4}$. 
Then, the generating functions are as follows:  
\begin{align*}
\Phi_{V, 3}&=x_{12}+x_{13}+x_{14}+x_{23}+x_{24}+x_{34}, \\
\Phi_{V, 2}&=x_{12}x_{13}+x_{12}x_{14}+x_{12}x_{23}+x_{12}x_{24}+x_{12}x_{34}\\
&+x_{13}x_{14}+x_{13}x_{23}+x_{13}x_{24}+x_{13}x_{34}+x_{14}x_{23}\\
&+x_{14}x_{24}+x_{14}x_{34}+x_{23}x_{24}+x_{23}x_{34}+x_{24}x_{34}, \\
\Phi_{V, 1}&=x_{12}x_{13}x_{23}+x_{12}x_{13}x_{14}+x_{13}x_{14}x_{23}+x_{12}x_{14}x_{23}\\
	&+x_{12}x_{14}x_{24}+x_{12}x_{14}x_{34}+x_{13}x_{23}x_{34}+x_{13}x_{23}x_{24}\\
	&+x_{12}x_{23}x_{24}+x_{14}x_{23}x_{34}+x_{12}x_{13}x_{34}+x_{13}x_{14}x_{24}\\
	&+x_{23}x_{24}x_{34}+x_{12}x_{24}x_{34}+x_{13}x_{24}x_{34}+x_{14}x_{24}x_{34}. 
\end{align*}
\end{Example}

By definition, 
the generating function $\Phi_{V, k}$ is a homogeneous polynomial of degree $\#V-k$. 
Moreover, the generating function $\Phi_{V, k}$ is a sum of square-free monomials. 

For the generating function $\Phi_{V, k}$, consider the matrix
\begin{align*}
H_{\Phi_{V, k}}=\left(\frac{\partial}{\partial x_{e}}\frac{\partial}{\partial x_{e'}}\Phi_{V, k}\right)_{e, e'\in \binom{V}{2}}.  
\end{align*}
The matrix $H_{\Phi_{V, k}}$ is called \textit{the Hessian matrix of $\Phi_{V, k}$}, 
and the determinant $\det H_{\Phi_{V, k}}$ is called \textit{the Hessian of $\Phi_{V, k}$}. 
We define $\widetilde H_{\Phi_{V, k}}$ to be the special value of $H_{\Phi_{V, k}}$ at $x_{e}=1$ for all $e$. 

\begin{Example}
Let $V=\Set{1, 2, 3, 4}$. 
In the case of $\Phi_{V, 2}$, we have 
\begin{align*}
\widetilde H_{\Phi_{V, 2}}=
\begin{pmatrix}
0&1&1&1&1&1 \\
1&0&1&1&1&1 \\
1&1&0&1&1&1 \\
1&1&1&0&1&1 \\
1&1&1&1&0&1 \\
1&1&1&1&1&0 \\
\end{pmatrix}.  
\end{align*}
The eigenvalues of $\widetilde H_{\Phi_{V, 2}}$ are $5, -1, -1, -1, -1, -1$. 

In the case of $\Phi_{V, 1}$, we have 
\begin{align*}
\widetilde H_{\Phi_{V, 1}}=
\begin{pmatrix}
0&3&4&3&3&3 \\
3&0&3&4&3&3 \\
4&3&0&3&3&3 \\
3&4&3&0&3&3 \\
3&3&3&3&0&4 \\
3&3&3&3&4&0 \\
\end{pmatrix}. 
\end{align*}
The eigenvalues of $\widetilde H_{\Phi_{V, 1}}$ are $16,-2,-2,-4,-4,-4$. 
\end{Example}

Note that the matrices $H_{\Phi_{V,1}}$ and $\widetilde H_{\Phi_{V, 1}}$ are always the zero matrices for $V=\Set{1,2,\ldots, n}$. 

\begin{Theorem}[Main theorem]\label{main theorem}
Consider a set $V=\Set{1, 2, \ldots, n}$. 
Let $n\geq 3$ and $0<k<n-2$. 
The determinant $\det\widetilde H_{\Phi_{V, k}}$ does not vanish. 
Moreover, the matrix $\widetilde H_{\Phi_{V, k}}$ has exactly one positive eigenvalue. 
\end{Theorem}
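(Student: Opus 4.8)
The plan is to exploit the $S_n$-symmetry of $\widetilde H_{\Phi_{V,k}}$. Since $\Phi_{V,k}$ is square-free, every diagonal entry of $\widetilde H_{\Phi_{V,k}}$ is $0$, while for $e\neq e'$ the $(e,e')$-entry equals the number of forests in $\FFF_V^{k}$ that contain both $e$ and $e'$. The group $S_n$ acts on $\binom{V}{2}$ and is transitive on pairs of adjacent edges and on pairs of disjoint edges, so this entry takes only two values: $a$ on an adjacent pair (sharing a vertex) and $b$ on a disjoint pair. Hence $\widetilde H_{\Phi_{V,k}}=a\,A_1+b\,A_2$, where $A_1,A_2$ are the adjacency matrices of the two nontrivial relations on $\binom{V}{2}$, i.e.\ the Johnson scheme $J(n,2)$ (the triangular graph). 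In particular $\widetilde H_{\Phi_{V,k}}$ is simultaneously diagonalized by the three common eigenspaces of the scheme, whose dimensions are $1$, $n-1$, and $\binom{n}{2}-n$, and whose eigenvectors are completely explicit (the constant vector, the vertex-incidence vectors, and their orthogonal complement).

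On these three eigenspaces $A_1$ and $A_2$ act by the classical Johnson eigenvalues, so $\widetilde H_{\Phi_{V,k}}$ has three eigenvalues,
\begin{align*}
\mu_0 &= 2(n-2)\,a+\binom{n-2}{2}\,b, \\
\mu_1 &= (n-4)\,a-(n-3)\,b, \\
\mu_2 &= -2\,a+b,
\end{align*}
with multiplicities $1$, $n-1$, and $\binom{n}{2}-n$. That $\mu_0>0$ is immediate from a double count: summing, over all forests in $\FFF_V^k$ through a fixed edge $e$, the number of other edges of the forest shows $2(n-2)a+\binom{n-2}{2}b=(n-k-1)\,c$, where $c>0$ is the number of forests through $e$ and $n-k-1\geq 1$ in the range $0<k<n-2$. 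So everything reduces to showing $\mu_1<0$ and $\mu_2<0$, i.e.\ $(n-3)\,b>(n-4)\,a$ and $b<2a$; granting these, $\widetilde H_{\Phi_{V,k}}$ has the single positive eigenvalue $\mu_0$, and $\det\widetilde H_{\Phi_{V,k}}=\mu_0\,\mu_1^{\,n-1}\,\mu_2^{\,\binom{n}{2}-n}\neq 0$.

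To evaluate $a$ and $b$ I would contract the two prescribed edges. The number of forests of $K_V$ with $k$ components containing a given edge pair equals a weighted count of spanning forests with $k$ components of the complete graph on $n-2$ ``blocks'', where the contracted endpoints form one block of size $3$ (adjacent case) or two blocks of size $2$ (disjoint case), and an edge between blocks of sizes $\sigma,\sigma'$ is weighted by $\sigma\sigma'$. The generalized Cayley formula gives the spanning-tree weight of a block set as $N^{(\#\mathrm{blocks})-2}\prod(\mathrm{sizes})$ with $N$ the total size, and summing over partitions of the blocks into $k$ connected groups---equivalently extracting coefficients from Abel-type generating functions built on the tree function $R=z\,e^{R}$---produces closed expressions for $a$ and $b$. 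For $k=1$ this already gives $a=3\,n^{\,n-4}$ and $b=4\,n^{\,n-4}$, for which both inequalities are transparent.

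The main obstacle is the last step for general $k$: proving $b<2a$ (the binding inequality; $\mu_1<0$ follows from the softer inequality $b\geq a$) from these explicit but intricate forest counts. Combinatorially, two disjoint edges and one adjacent pair both involve the same number of vertices, but the former splits them between two nascent components, and bounding how this changes the number of completions to a $k$-component forest is where the real work lies; I expect it to reduce to a monotonicity or log-concavity estimate for the relevant forest numbers. Once the two inequalities are in place, the eigenvalue sign count is routine; the boundary value $k=n-2$ (where $\widetilde H_{\Phi_{V,k}}=J-I$) is the easy quadratic case excluded from the stated range.
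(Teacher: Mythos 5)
Your first half coincides with the paper's own argument. The $S_n$-symmetry giving a two-valued off-diagonal matrix is the paper's Lemma \ref{the entries} (your $a,b$ are its $p,q$), and the spectrum you read off from the Johnson scheme $J(n,2)$, with multiplicities $1$, $n-1$, $\binom{n}{2}-n$, is exactly Lemma \ref{original eigenvalues} and Proposition \ref{eigenvalues} (the paper proves this by generalizing \cite[Proposition 3.2]{Y2018} rather than by invoking association schemes, but the content is identical). Both you and the paper thereby reduce the theorem to the two inequalities $-2a+b<0$ and $(n-4)a-(n-3)b<0$.

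The genuine gap is that you never prove these inequalities for any $k\geq 2$: you verify them only at $k=1$ (where Moon's formula gives $a=3n^{n-4}$, $b=4n^{n-4}$), and for general $k$ you propose extracting closed forms for $a$ and $b$ via contraction and Abel-type generating functions, deferring the decisive step to an unspecified ``monotonicity or log-concavity estimate.'' That deferred step is the entire content of the theorem beyond the known $k=1$ case. The paper closes it without any closed-form computation: stratify $P$ and $Q$ (forests through the adjacent pair, resp.\ the disjoint pair) by the vertex set $W\supset\Set{1,2,3,4}$ of the components meeting $\Set{1,2,3,4}$. Within a stratum, either all four marked vertices lie in a single tree on $W$ --- and by Moon's formula \eqref{Moon} such trees number $3\,\#W^{\#W-4}$ in the adjacent case versus $4\,\#W^{\#W-4}$ in the disjoint case --- or they are split between two trees on $W$, and an explicit edge-swap bijection (Lemma \ref{forestbij}) shows those two counts are \emph{equal}. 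Multiplying by $\#\FFF_{W^{\mathsf{c}}}^{(k-1)}$, resp.\ $\#\FFF_{W^{\mathsf{c}}}^{(k-2)}$, and summing over $W$ yields the exact linear relations $p=3t+f$ and $q=4t+f$ with $t>0$, $f\geq 0$, from which $-2p+q=-2t-f<0$ and $(n-4)p-(n-3)q=-nt-f<0$ follow at once. So the step you flagged as ``where the real work lies'' is indeed the crux, and it is resolved not by an asymptotic or log-concavity estimate but by this decomposition identity; without it, your argument establishes the theorem only for $k=1$, which is already in \cite{Y2018}.
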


Now, we prove Theorem \ref{main theorem}. 
The first step is to compute the eigenvalues of $\widetilde H_{\Phi_{V, k}}$ (Proposition \ref{eigenvalues}). 
The second step is to show that each eigenvalue of $\widetilde H_{\Phi_{V, k}}$ is non-zero (Proposition \ref{nonzero}). 

The eigenvalues of $\widetilde H_{\Phi_{V, k}}$ are known for $k=1$ \cite[Proposition 3.2]{Y2018}. 
The proof of \cite[Proposition 3.2]{Y2018} can be generalized as follows. 

\begin{Lemma}\label{original eigenvalues}
Let $E$ be the set of the edges of the complete graph $K_{V}$ and $\#V=n$. 
Let $H=\left(h_{e, e'}\right)_{e, e'\in E}$ be the matrix defined by 
\begin{align*}
h_{e, e'}=
\begin{cases}
\alpha, & e=e', \\
\beta, & \#e\cap e'=1, \\
\gamma, & \#e\cap e'=0. 
\end{cases}
\end{align*}
The eigenvalues of $H$ are 
\begin{align*}
\alpha+(2n-4)\beta+\frac{(n-2)(n-3)}{2}\gamma, \\
\alpha-2\beta+\gamma, \\
\alpha+(n-4)\beta-(n-3)\gamma. 
\end{align*}
The dimensions of the eigenspaces of $H$ associate with 
$\alpha+(2n-4)\beta+\frac{(n-2)(n-3)}{2}\gamma, \alpha-2\beta+\gamma$ 
and 
$\alpha+(n-4)\beta-(n-3)\gamma$ are 
\begin{align*}
1, &&
\binom{n}{2}-n, &&
n-1, 
\end{align*}
respectively. 
\end{Lemma}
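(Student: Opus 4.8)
The plan is to recognize $H$ as an element of the Bose--Mesner algebra of the triangular graph $T(n)$ (the graph on $\binom{V}{2}$ whose edges join pairs meeting in one vertex) and to diagonalize it by exhibiting explicit $H$-invariant subspaces on which $H$ acts as a scalar. Write $H = \alpha I + \beta A_1 + \gamma A_2$, where $A_1$ is the $0/1$ matrix recording $\#(e\cap e') = 1$ and $A_2$ records $\#(e\cap e') = 0$; these satisfy $I + A_1 + A_2 = J$, the all-ones matrix. It then suffices to split $\RR^{E}$ into three mutually orthogonal $H$-invariant subspaces of dimensions $1$, $n-1$, and $\binom n2 - n$ and to read off the eigenvalue of $H$ on each.

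First I would handle the two small subspaces via vertex-stars. For each $i\in V$ let $v^{(i)}\in\RR^{E}$ be the indicator vector of the edges incident to $i$, and set $W=\operatorname{span}\Set{v^{(i)}|i\in V}$. A short computation gives $H v^{(i)} = (\alpha+(n-2)\beta)\,v^{(i)} + (2\beta+(n-3)\gamma)(\mathbf 1 - v^{(i)})$, so $W$ is $H$-invariant. Two consequences follow: the all-ones vector $\mathbf 1=\tfrac12\sum_i v^{(i)}$ is an eigenvector with eigenvalue equal to the common row sum $\alpha+(2n-4)\beta+\tfrac{(n-2)(n-3)}{2}\gamma$; and on the zero-sum subspace $W_0=\Set{\sum_i c_i v^{(i)}|\sum_i c_i=0}$ the matrix $H$ acts as the scalar $(\alpha+(n-2)\beta)-(2\beta+(n-3)\gamma)=\alpha+(n-4)\beta-(n-3)\gamma$. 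Since $c_a+c_b=0$ for every edge $\Set{a,b}$ forces all $c_i=0$ when $n\geq 3$, the $v^{(i)}$ are linearly independent, so $\dim W=n$, $\dim W_0=n-1$, and $W=W_0\oplus\RR\mathbf 1$ yields the first two claimed eigenspaces.

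The crux is the third eigenvalue, on $W^{\perp}$. As $H$ is symmetric and $W$ is invariant, $W^{\perp}$ is invariant with $\dim W^{\perp}=\binom n2 - n$, and it remains to find the scalar by which $H$ acts. Here the decisive identity is $BB^{T}=2I+A_1$, where $B$ is the $E\times V$ incidence matrix whose columns are the $v^{(i)}$, valid because $(BB^{T})_{e,e'}=\#(e\cap e')$. For $w\in W^{\perp}$ the $i$-th entry of $B^{T}w$ is $\langle v^{(i)},w\rangle=0$, so $B^{T}w=0$ and hence $(2I+A_1)w=BB^{T}w=0$, i.e. $A_1 w=-2w$. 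Rewriting $H=(\alpha-\gamma)I+(\beta-\gamma)A_1+\gamma J$ and using $Jw=\mathbf 1(\mathbf 1^{T}w)=0$ on $W^{\perp}$, I obtain $H w=\big((\alpha-\gamma)-2(\beta-\gamma)\big)w=(\alpha-2\beta+\gamma)w$, the third eigenvalue, with multiplicity $\binom n2 - n$.

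Finally the multiplicities $1+(n-1)+\big(\binom n2 - n\big)=\binom n2$ exhaust $\RR^{E}$, so no eigenvalue has been missed. I expect the one genuine obstacle to be pinning down the large eigenspace $W^{\perp}$: a direct computation of the action of $A_1$ there is unwieldy, whereas the factorization $A_1+2I=BB^{T}$ through the incidence matrix makes it transparent and is the key step of the whole argument.
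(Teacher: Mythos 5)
Your proof is correct, and every step checks: the star computation $Hv^{(i)}=(\alpha+(n-2)\beta)v^{(i)}+(2\beta+(n-3)\gamma)(\mathbf{1}-v^{(i)})$ is right and simultaneously yields the invariance of $W$, the row-sum eigenvalue $\alpha+(2n-4)\beta+\tfrac{(n-2)(n-3)}{2}\gamma$ on $\mathbf{1}$, and the eigenvalue $\alpha+(n-4)\beta-(n-3)\gamma$ on the zero-sum part $W_{0}$; the triangle argument correctly gives linear independence of the $v^{(i)}$ for $n\geq 3$; the identity $BB^{T}=2I+A_{1}$ holds entrywise, so $A_{1}w=-2w$ and $Jw=0$ on $W^{\perp}$, giving $\alpha-2\beta+\gamma$ there; and the dimension count $1+(n-1)+\bigl(\tbinom{n}{2}-n\bigr)=\tbinom{n}{2}$ shows the decomposition exhausts $\RR^{E}$, so no eigenvalue is missed.

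Note that the paper itself supplies no proof of this lemma: it only asserts that the proof of Proposition 3.2 in \cite{Y2018} ``can be generalized,'' and that reference proceeds by exhibiting eigenvectors explicitly (differences of stars $v^{(i)}-v^{(j)}$ for the eigenvalue $\alpha+(n-4)\beta-(n-3)\gamma$, and vectors supported on signed quadrilaterals for $\alpha-2\beta+\gamma$) and verifying the eigenvalue equation for each family. Your route reaches the same orthogonal decomposition $\RR^{E}=\RR\mathbf{1}\oplus W_{0}\oplus W^{\perp}$, but it replaces the explicit-eigenvector verification on the large eigenspace by the factorization $A_{1}+2I=BB^{T}$ through the vertex--edge incidence matrix. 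That is a genuine improvement in exactly the place you identify as the crux: producing and verifying $\tbinom{n}{2}-n$ independent quadrilateral vectors is the fiddly part of the classical argument, whereas characterizing the eigenspace as $W^{\perp}$ makes both the eigenvalue and the multiplicity immediate. What the explicit construction buys instead is concrete eigenvectors in hand, which your argument never produces (and never needs). Your method also adapts, with the incidence matrix split over the two vertex classes, to the bipartite analogue, Lemma \ref{original eigenvalues 2}.
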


Fix $V=\Set{1, 2, \ldots, n}$ and $n\geq 4$. 
Define
\begin{align*}
P&=\Set{F\in \FFF_{V}^{k}|\Set{1, 2}, \Set{2, 3}\in E(F)}, \\
R&=\Set{F\in \FFF_{V}^{k}|\Set{1, 2}, \Set{3, 4}\in E(F)},  
\end{align*}
and
\begin{align*}
p=\#P, && q=\#Q. 
\end{align*}

\begin{Lemma}\label{the entries}
We set $\widetilde H_{\Phi_{V, k}}=(h_{e, e'})$. 
Then, we have 
\begin{align*}
h_{e,e'}=
\begin{cases}
0, & e=e',  \\
p, & \#e\cap e'=1,  \\
q, & \#e\cap e'=0. 
\end{cases}
\end{align*}
\end{Lemma}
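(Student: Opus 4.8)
The plan is to read off the entries of $\widetilde H_{\Phi_{V,k}}$ directly from the monomial expansion of $\Phi_{V,k}$, interpret each off-diagonal entry as a count of forests, and then use the symmetry of the complete graph to reduce the two off-diagonal cases to the representative pairs defining $p$ and $q$. The whole argument rests on the fact, already noted before the lemma, that $\Phi_{V,k}$ is a sum of square-free monomials.

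First I would treat the diagonal. Since no variable $x_e$ occurs to a power higher than one in any monomial $\prod_{f\in E(F)}x_f$, the operator $\frac{\partial^2}{\partial x_e^2}$ annihilates every term, so $h_{e,e}=0$. For $e\neq e'$, applying $\frac{\partial}{\partial x_e}\frac{\partial}{\partial x_{e'}}$ to a square-free monomial $\prod_{f\in E(F)}x_f$ produces $\prod_{f\in E(F)\setminus\Set{e,e'}}x_f$ when both $e,e'\in E(F)$ and $0$ otherwise. Hence
\begin{align*}
\frac{\partial}{\partial x_e}\frac{\partial}{\partial x_{e'}}\Phi_{V,k}
=\sum_{\substack{F\in\FFF_{V}^{k}\\ e,e'\in E(F)}}\ \prod_{f\in E(F)\setminus\Set{e,e'}}x_f.
\end{align*}
Specializing $x_f=1$ for all $f$ collapses each surviving monomial to $1$, so that
\begin{align*}
h_{e,e'}=\#\Set{F\in\FFF_{V}^{k}|e,e'\in E(F)}.
\end{align*}
Thus every off-diagonal entry is simply the number of forests in $\FFF_{V}^{k}$ containing both $e$ and $e'$ as edges.

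It remains to see that this count depends only on $\#(e\cap e')$. The symmetric group $S_n$ acts on $V=\Set{1,2,\ldots,n}$, hence on $\binom{V}{2}$ and on the subgraphs of $K_V$; a permutation carries a forest with $k$ components to a forest with $k$ components, so it restricts to a bijection of $\FFF_{V}^{k}$, and $e,e'\in E(F)$ holds if and only if $\sigma(e),\sigma(e')\in E(\sigma F)$. Therefore $h_{e,e'}$ is constant on each $S_n$-orbit of unordered pairs $\Set{e,e'}$. By the multiple transitivity of $S_n$ on vertices, any two adjacent edges can be mapped to $\Set{\Set{1,2},\Set{2,3}}$ and any two disjoint edges to $\Set{\Set{1,2},\Set{3,4}}$, and these are the only two orbits of pairs of distinct edges. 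This yields $h_{e,e'}=p$ when $\#(e\cap e')=1$ and $h_{e,e'}=q$ when $\#(e\cap e')=0$. The computation is entirely routine; the only point needing a word of care is this last orbit analysis, namely checking that the intersection size is a complete invariant for the $S_n$-action on pairs of edges, which I expect to be the single substantive step of the proof.
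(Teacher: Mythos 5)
Your proof is correct and takes essentially the same approach as the paper: the diagonal vanishes because $\Phi_{V,k}$ is square-free, the off-diagonal entry $h_{e,e'}$ counts the forests in $\FFF_{V}^{k}$ containing both $e$ and $e'$, and the $S_{n}$-action on $V$ (inducing automorphisms of $K_{V}$ that preserve $\FFF_{V}^{k}$) shows this count depends only on $\#(e\cap e')$, reducing to the representative pairs defining $p$ and $q$. Your write-up merely spells out the derivative computation and the orbit analysis that the paper leaves implicit.
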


\begin{proof}
Since $\Phi_{V, k}$ is a sum of square-free monomials, each diagonal component of $\widetilde H_{\Phi_{V, k}}$ is zero. 

Let $V=\Set{1, 2, \ldots, n}$. 
For any $\sigma\in S_{n}$, consider a map $\sigma:V\to V$ such that $i\mapsto \sigma(i)$. 
This map induces an isomorphism between the complete graphs with $n$ vertices. 
For $e, e'\in \binom{V}{2}$ such that $\#e\cap e'=1$, 
the number of the forests with $k$ components which contain the edges $e$ and $e'$ is $p$, since 
there is an isomorphism $\sigma$ such that $\sigma\Set{1,2}=e$ and  $\sigma\Set{2,3}=e'$. 

Similarly, we can prove the case where $\#e\cap e'=0$. 
\end{proof}

\begin{Prop}\label{eigenvalues} 
For $0<k<n-2$,  
the eigenvalues of $\widetilde H_{\Phi_{V, k}}$ are 
\begin{align*}
(2n-4)p+\frac{(n-2)(n-3)}{2}q,&& 
-2p+q, &&
(n-4)p-(n-3)q. 
\end{align*}
The dimensions of the eigenspaces of $\widetilde H_{\Phi_{V, k}}$ associate with $(2n-4)p+\frac{(n-2)(n-3)}{2}q$, $-2p+q$ and $(n-4)p-(n-3)q$ are 
\begin{align*}
1, &&
\binom{n}{2}-n, &&
n-1, 
\end{align*}
respectively. 
\end{Prop}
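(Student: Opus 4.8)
The plan is to recognize that $\widetilde H_{\Phi_{V,k}}$ is exactly an instance of the structured matrix treated in Lemma \ref{original eigenvalues}, and then read off the eigenvalues and their multiplicities by substitution. By Lemma \ref{the entries}, the entry $h_{e,e'}$ of $\widetilde H_{\Phi_{V,k}}$ depends only on the intersection pattern of the pair $e, e' \in \binom{V}{2}$: it equals $0$ when $e = e'$, equals $p$ when $\#e \cap e' = 1$, and equals $q$ when $\#e \cap e' = 0$. This is precisely the form of the matrix $H = (h_{e,e'})$ in Lemma \ref{original eigenvalues} with the three parameters specialized to $\alpha = 0$, $\beta = p$, and $\gamma = q$.

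Granting this identification, the eigenvalues follow immediately. I would substitute $\alpha = 0$, $\beta = p$, $\gamma = q$ into the three eigenvalue expressions of Lemma \ref{original eigenvalues}: the value $\alpha + (2n-4)\beta + \frac{(n-2)(n-3)}{2}\gamma$ becomes $(2n-4)p + \frac{(n-2)(n-3)}{2}q$, the value $\alpha - 2\beta + \gamma$ becomes $-2p + q$, and the value $\alpha + (n-4)\beta - (n-3)\gamma$ becomes $(n-4)p - (n-3)q$. The corresponding eigenspace dimensions $1$, $\binom{n}{2} - n$, and $n-1$ are unchanged, since they depend only on the combinatorial shape of $H$ and not on the values of $\alpha, \beta, \gamma$. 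This reproduces the statement of the Proposition exactly.

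The only points that genuinely require care are the hypotheses. The description of the entries in Lemma \ref{the entries}, and in particular the existence of a pair of disjoint edges realizing the $\gamma = q$ case, presupposes $n \geq 4$; this is guaranteed by the standing assumption $0 < k < n-2$, which forces $n \geq 4$. Within this range the quantities $p$ and $q$ are well defined as counts of forests in $\FFF_V^k$, so the substitution is legitimate. Since both supporting lemmas are already in hand, there is essentially no remaining obstacle at the level of the Proposition itself; the substance of the argument lives entirely in Lemma \ref{original eigenvalues}, whose proof generalizes the $k=1$ computation of \cite[Proposition 3.2]{Y2018}, and in the $S_n$-symmetry argument behind Lemma \ref{the entries}. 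The Proposition is then just the assembly of these two inputs.
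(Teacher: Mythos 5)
Your proposal is correct and matches the paper's own (implicit) argument exactly: the paper derives Proposition \ref{eigenvalues} by combining Lemma \ref{the entries} with Lemma \ref{original eigenvalues} under the specialization $\alpha=0$, $\beta=p$, $\gamma=q$, just as you do. Your added remark that $0<k<n-2$ forces $n\geq 4$, so that disjoint edge pairs exist and $q$ is well defined, is a correct and worthwhile clarification that the paper leaves unstated.
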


Next, we show that each eigenvalue of $\widetilde H_{\Phi_{V, k}}$ is non-zero. 
Let $W$ be a subset of $V$ such that $\Set{1,2,3,4}\subset W$. 
For $W$, define 
\begin{align*}
\TTT_{W}'&=\Set{T\in \FFF_{W}^{1}| \Set{1,2}, \Set{2,3}\in E(T)}, \\
\TTT_{W}''&=\Set{T\in \FFF_{W}^{1}| \Set{1,2}, \Set{3,4}\in E(T)}.  
\end{align*}
Due to \cite{MR0231755}, we have 
\begin{align}\label{Moon}
\#\TTT_{W}'=3\#W^{\#W-4}, &&\#\TTT_{W}''=4\#W^{\#W-4}. 
\end{align}

For $W$, define
\begin{align*}
\FFF_{W}'&=\Set{T\sqcup T' \in \FFF_{W}^{2}| \Set{1,2}, \Set{2,3}\in E(T), 4\in V(T')},  \\
\FFF_{W}''&=\Set{T\sqcup T' \in \FFF_{W}^{2}| \Set{1,2}\in E(T), \Set{3, 4}\in E(T')}. 
\end{align*}

\begin{Lemma}\label{forestbij}
For $\Set{1,2,3,4}\subset W \subset \Set{1, 2, \ldots, n}$, we have 
\begin{align*}
\#\FFF_{W}'=\#\FFF_{W}''. 
\end{align*}
\end{Lemma}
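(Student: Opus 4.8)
The plan is to prove the equality by exhibiting an explicit bijection $\phi\colon \FFF_{W}'\to \FFF_{W}''$, rather than computing the two cardinalities separately. The guiding observation is that the two families differ only in how the marked vertices $1,2,3,4$ are distributed among the two tree-components of a spanning forest of $W$: for an element of $\FFF_{W}'$ the vertices $1,2,3$ lie in one component $T$ (joined there by the edges $\Set{1,2}$ and $\Set{2,3}$) while $4$ lies in the other, whereas for an element of $\FFF_{W}''$ the pair $1,2$ lies in one component and the pair $3,4$ in the other. One should be able to pass from the first configuration to the second by \emph{cutting} the edge $\Set{2,3}$ and \emph{re-attaching} the freed vertex $3$ to the vertex $4$.

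Concretely, for $F=T\sqcup T'\in\FFF_{W}'$ I would define $\phi(F)$ to be the graph obtained from $F$ by deleting the edge $\Set{2,3}$ and inserting the edge $\Set{3,4}$; the candidate inverse $\psi\colon\FFF_{W}''\to\FFF_{W}'$ deletes $\Set{3,4}$ and inserts $\Set{2,3}$. First I would check that $\phi$ is well defined. Since $T$ is a tree containing the edge $\Set{2,3}$, deleting that edge splits $T$ into two subtrees $T_{2}$ and $T_{3}$; because the edge $\Set{1,2}$ survives the deletion, the vertices $1$ and $2$ remain together in $T_{2}$, while $3$ lies in $T_{3}$. Adjoining $T'$ (which contains $4$) gives a three-component forest. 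As $3\in V(T_{3})$ and $4\in V(T')$ lie in distinct components, inserting $\Set{3,4}$ creates no cycle and merges $T_{3}$ with $T'$ into a single tree $S'$; the output is a two-component forest in which $\Set{1,2}$ and $\Set{3,4}$ sit in different components, hence an element of $\FFF_{W}''$. The well-definedness of $\psi$ is entirely symmetric.

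It then remains to verify that $\psi$ and $\phi$ are mutually inverse. This is done by tracking the three intermediate pieces $T_{2},T_{3},T'$: applying $\psi$ to $\phi(F)$ deletes $\Set{3,4}$, which (as $\Set{3,4}$ is the unique edge joining the parts of $S'$ containing $3$ and $4$) recovers exactly $T_{3}$ and $T'$, and reinserting $\Set{2,3}$ rebuilds $T$ from $T_{2}$ and $T_{3}$, so $\psi(\phi(F))=F$; the identity $\phi(\psi(G))=G$ follows in the same way. The bijection yields $\#\FFF_{W}'=\#\FFF_{W}''$ immediately.

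The hard part is not any computation but the bookkeeping in the well-definedness step: one must confirm that $1$ and $2$ genuinely stay in one subtree after the cut (this is exactly where the hypothesis $\Set{1,2}\in E(T)$ is used) and that $3$ and $4$ genuinely lie in different components before the insertion, so that the result is again a \emph{forest} with the correct marked edges. Once the component structure is tracked carefully these checks are routine. As a fallback one could instead compute each cardinality as a sum over the partition $W=A\sqcup B$ into the two vertex classes, weighting by the spanning-tree counts underlying \eqref{Moon} (that is, $m^{j-2}\prod_{i}a_{i}$ for a spanning forest of $K_{m}$ with component sizes $a_{1},\dots,a_{j}$), but the bijection avoids this summation altogether.
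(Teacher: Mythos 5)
Your proposal is correct and takes essentially the same route as the paper: the paper's bijection $f$ (with inverse $g$) is exactly your cut-and-reattach map $\phi$ (resp.\ $\psi$), deleting the edge $\Set{2,3}$ and inserting $\Set{3,4}$, and vice versa. Your write-up even checks the mutual-inverse step slightly more explicitly than the paper, which simply asserts that $f$ and $g$ are inverses.
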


\begin{proof}
We construct a bijection between $\FFF_{W}'$ and $\FFF_{W}''$. 

We define a map $f$ from $\FFF_{W}'$ to $\FFF_{W}''$ in the following manner: 
Let $T=T_{a}\sqcup T_{b}\in \FFF_{W}'$, 
where $T_{a}$ contains the edges $\Set{1,2}$ and $\Set{2,3}$, 
and $T_{b}$ contains the vertices $4$. 
Since $T_{a}$ is a tree, if we remove the edge $\Set{2,3}$ from $T_{a}$, 
then the tree $T_{a}$ is decomposed into two trees. 
One of them contains the edges $\Set{1,2}$, and we set $T_{a}^{\Set{1, 2}}$ for this tree. 
The other one contains the vertex $3$, and we set $T_{a}^{\Set{3}}$ for this tree. 
Then we have a decomposition $T_{a}=T_{a}^{\Set{1, 2}}\cup\Set{2,3}\cup T_{a}^{\Set{3}}$. 
For $T$, define
\begin{align*}
f(T)=T_{a}^{\Set{1,2}}\sqcup\left(\Set{3, 4}\cup T_{a}^{\Set{3}}\cup T_{b}\right). 
\end{align*} 
Then, we have $f(T)\in \FFF_{W}''$. 
Hence, the map $f$ is well-defined. 

We define a map $g$ from $\FFF_{W}''$ to $\FFF_{W}'$ in the following manner: 
Let $T=T_{c}\sqcup T_{d}\in \FFF_{W}''$, 
where $T_{c}$ contains the edge $\Set{1,2}$, 
and $T_{d}$ contains the edge $\Set{3,4}$. 
Since $T_{d}$ is a tree, if we remove the edge $\Set{3,4}$ from $T_{d}$, 
then the tree $T_{d}$ is decomposed into two trees. 
One of them contains the vertex $3$, and we set $T_{d}^{\Set{3}}$ for this tree. 
The other one contains the vertex $4$, and we set $T_{d}^{\Set{4}}$ for this tree. 
Then we have a decomposition $T_{d}=T_{d}^{\Set{3}}\cup\Set{3,4}\cup T_{d}^{\Set{4}}$. 
For $T$, define
\begin{align*}
g(T)=\left(\Set{2,3}\cup T_{d}^{\Set{3}}\cup T_{c}\right)\sqcup\left(T_{d}^{\Set{4}}\right). 
\end{align*} 
Then, we have $g(T)\in \FFF_{W}'$. 
Hence, the map $g$ is well-defined. 

The maps $f$ and $g$ are inverses of each other.  
\end{proof}

We are ready to show that each eigenvalue of $\widetilde H_{\Phi_{V, k}}$ is non-zero. 

\begin{Prop}\label{nonzero}
Let $\#V=n (n\geq 3)$ and $0<k<n-2$. 
The matrix $\widetilde H_{\Phi_{V, k}}$ does not have the zero-eigenvalues. 
Moreover we have the following: 
\begin{align*}
(2n-4)p+\frac{(n-2)(n-3)}{2}q&>0, \\
-2p+q&<0, \\
(n-4)p-(n-3)q&<0. 
\end{align*}
\end{Prop}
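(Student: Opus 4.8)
The plan is to compare $p=\#P$ and $q=\#R$ by splitting each set according to whether all four distinguished vertices $1,2,3,4$ lie in one component. Since the edges $\Set{1,2},\Set{2,3}$ already place $1,2,3$ in a single tree of any $F\in P$, I would write $P=P_{A}\sqcup P_{B}$, where $P_{A}$ (resp. $P_{B}$) consists of the $F$ for which $4$ does (resp. does not) lie in the component containing $1,2,3$. Likewise, since $\Set{1,2},\Set{3,4}$ place $1,2$ in one tree and $3,4$ in one tree of any $F\in R$, I would write $R=R_{A}\sqcup R_{B}$ according to whether these two trees coincide. Writing $a=\#P_{A}$ and $b=\#P_{B}$, we have $p=a+b$, and the whole proof reduces to expressing $q$ through $a$ and $b$.

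For $P_{A}$ and $R_{A}$ I would condition on the vertex set $W\supseteq\Set{1,2,3,4}$ of the common component. That component is a spanning tree of $K_{W}$ lying in $\TTT_{W}'$ for $P_{A}$ and in $\TTT_{W}''$ for $R_{A}$, while the remaining vertices $V\setminus W$ carry an arbitrary forest with $k-1$ components, a count that depends only on $V\setminus W$. Denoting by $N(U,j)$ the number of forests with $j$ components on a vertex set $U$, this gives
\begin{align*}
\#P_{A}=\sum_{W}\#\TTT_{W}'\cdot N(V\setminus W,k-1), && \#R_{A}=\sum_{W}\#\TTT_{W}''\cdot N(V\setminus W,k-1),
\end{align*}
the sums running over $\Set{1,2,3,4}\subseteq W\subseteq V$. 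Formula \eqref{Moon} gives $\#\TTT_{W}''=\tfrac{4}{3}\#\TTT_{W}'$ for every $W$, so the two sums agree term by term up to the factor $\tfrac{4}{3}$, and $\#R_{A}=\tfrac{4}{3}a$.

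For $P_{B}$ and $R_{B}$ I would instead condition on the vertex set $W\supseteq\Set{1,2,3,4}$ of the union of the two relevant components. Those two components form a two-component spanning forest of $K_{W}$, lying in $\FFF_{W}'$ for $P_{B}$ and in $\FFF_{W}''$ for $R_{B}$, while $V\setminus W$ now carries a forest with $k-2$ components. Hence
\begin{align*}
\#P_{B}=\sum_{W}\#\FFF_{W}'\cdot N(V\setminus W,k-2), && \#R_{B}=\sum_{W}\#\FFF_{W}''\cdot N(V\setminus W,k-2),
\end{align*}
and Lemma \ref{forestbij}, which asserts $\#\FFF_{W}'=\#\FFF_{W}''$ for each $W$, forces $\#R_{B}=b$. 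Adding the two parts yields $q=\#R_{A}+\#R_{B}=\tfrac{4}{3}a+b$.

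With $p=a+b$ and $q=\tfrac{4}{3}a+b$ in hand, the three assertions become immediate. One computes $2p-q=\tfrac{2}{3}a+b$ and $(n-3)q-(n-4)p=\tfrac{n}{3}a+b$, both nonnegative since $a,b\geq 0$, which give the second and third inequalities; the first expression $(2n-4)p+\tfrac{(n-2)(n-3)}{2}q$ has only positive coefficients for $n\geq 3$. Strictness in all three follows once $p=a+b>0$, and for $0<k<n-2$ this is clear: the edges $\Set{1,2}$ and $\Set{2,3}$ together with $(n-2)-k\geq 1$ further edges, each joining two distinct components, form a forest in $P$ with exactly $k$ components. The main obstacle is arranging the two decompositions so that the remainder forest on $V\setminus W$ factors out cleanly as $N(V\setminus W,k-1)$ and $N(V\setminus W,k-2)$; once this bookkeeping is set up, formula \eqref{Moon} disposes of the $A$-parts and Lemma \ref{forestbij} disposes of the $B$-parts.
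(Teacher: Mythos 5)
Your proof is correct and is essentially the paper's own argument: the paper likewise splits $p$ and $q$ according to whether the four distinguished vertices lie in one or two components, conditions on the vertex set $W$ of those components, and applies \eqref{Moon} to the one-component parts and Lemma \ref{forestbij} to the two-component parts, writing $p=3t+f$ and $q=4t+f$, which is exactly your $p=a+b$, $q=\tfrac{4}{3}a+b$ under $a=3t$, $b=f$. Your closing remark on why $p>0$ for $0<k<n-2$ is in fact slightly more careful than the paper, which simply asserts positivity of the relevant counts.
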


\begin{proof}
Since $p$ and $q$ are the number of some forests, we have $p, q>0$. 
Therefore $(2n-4)p+\frac{(n-2)(n-3)}{2}q$, the eigenvalue of $\widetilde H_{\Phi_{V, k}}$, is positive.  

Let us show that the other eigenvalues of $\widetilde H_{\Phi_{V, k}}$ are negative. 
Let 
\begin{align*}
P&=\left(\bigsqcup_{W}(\TTT_{W}'\times \FFF_{W^{\mathsf{c}}}^{(k-1)})\sqcup\bigsqcup_{W}(\FFF_{W}'\times\FFF_{W^{\mathsf{c}}}^{(k-2)})\right), \\
Q&=\left(\bigsqcup_{W}(\TTT_{W}''\times \FFF_{W^{\mathsf{c}}}^{(k-1)})\sqcup\bigsqcup_{W}(\FFF_{W}''\times\FFF_{W^{\mathsf{c}}}^{(k-2)})\right),  
\end{align*}
where the sums run over $\Set{1,2,3,4}\subset W \subset \Set{1, 2, \ldots, n}$. 
Then we have $\# P=p$ and $\# Q=q$. 
For $W$, define $f_{W}=\#\FFF_{W}'$ and $t_{W}=\#W^{\#W-4}$. 
By \eqref{Moon} and Lemma \ref{forestbij}, 
we have 
\begin{align*}
\#\TTT_{W}'=3t_{W},  &&
\#\TTT_{W}''=4t_{W},  &&
\#\FFF_{W}'=\#\FFF_{W}''=f_{W}. 
\end{align*}
Then 
\begin{align*}
p&=\sum_{W}3t_{W}\#\FFF_{W^{\mathsf{c}}}^{(k-1)}+\sum_{W}f_{W}\#\FFF_{W^{\mathsf{c}}}^{(k-2)} \\
&=3\sum_{W}t_{W}\#\FFF_{W^{\mathsf{c}}}^{(k-1)}+\sum_{W}f_{W}\#\FFF_{W^{\mathsf{c}}}^{(k-2)}, 
\end{align*}
and
\begin{align*}
q&=\sum_{W}4t_{W}\#\FFF_{W^{\mathsf{c}}}^{(k-1)}+\sum_{W}f_{W}\#\FFF_{W^{\mathsf{c}}}^{(k-2)} \\
&=4\sum_{W}t_{W}\#\FFF_{W^{\mathsf{c}}}^{(k-1)}+\sum_{W}f_{W}\#\FFF_{W^{\mathsf{c}}}^{(k-2)}.  
\end{align*}
If we set $t=\sum_{W}t_{W}\#\FFF_{W^{\mathsf{c}}}^{(k-1)}$ and $f=\sum_{W}f_{W}\#\FFF_{W^{\mathsf{c}}}^{(k-2)}$, 
then we have 
\begin{align*}
p=&3 t+f, \\
q=&4 t+f. 
\end{align*}
Note that $t, f>0$. 
Hence 
\begin{align*}
-2p+q&=-2(3 t+f)+(4 t+f) \\
&=-2 t-f \\
&<0, \\
(n-4)p-(n-3)q&=(n-4)(3 t+f)-(n-3)(4 t+f) \\
&=-n t-f \\
&<0. 
\end{align*}
\end{proof}

By Propositions \ref{eigenvalues} and \ref{nonzero}, we have Theorem \ref{main theorem}.

\subsection{The generating function for the forests in the complete bipartite graph}\label{subsec:The generating function for the forests in the complete bipartite graph}

\newcommand{\vA}{{1}}
\newcommand{\vB}{\bar{1}}
\newcommand{\vC}{{2}}
\newcommand{\vD}{\bar{2}}
\newcommand{\vM}{{m}}
\newcommand{\vN}{\bar{n}}
\newcommand{\vX}{{i}}
\newcommand{\vY}{\bar{j}}

Let $X$ and $Y$ be finite sets and $X\cap Y=\emptyset$. 
Let $V=X\sqcup Y$. 
For $X$ and $Y$, define
\begin{align*}
\FFF_{X, Y}^{k}=\Set{F\in \FFF_{V}^{k}|\text{if $e\in E(F)$, then $e\not\in\binom{X}{2}$ and $e\not\in\binom{Y}{2}$}},  
\end{align*}
where $\FFF_{V}^{k}$ is in \ref{subsec:The generating function for the forests in the complete graph}. 
We define \textit{the generating function $\Phi_{X, Y, k}$ for $\FFF_{X, Y}^{k}$} by
\begin{align*}
\Phi_{X, Y, k}=\sum_{F\in \FFF_{X, Y}^{k}}\prod_{e\in E(F)}x_{e}. 
\end{align*}

\begin{Remark}\label{the generating function for the forests 2}
Let $K_{X, Y}$ be the complete bipartite graph with the vertex sets $X$ and $Y$. 
The set $\FFF_{X, Y}^{k}$ can be regarded as the set of forests of $K_{X, Y}$ with $k$ components.  
A forest with $r$ edges in the complete bipertite graph $K_{m, n}=K_{\Set{1, 2, \ldots, m}, \Set{\bar{1},\ldots, \bar{n}}}$ is a forest consisting of $m+n-r$ components.  
For $k=1$, 
an element in $\FFF_{X, Y}^{k}$ is called a \textit{spanning tree} in $K_{X, Y}$, and
the generating function $\Phi_{X, Y, k}$ is called \textit{the Kirchhoff polynomial} of $K_{X, Y}$.  
\end{Remark}

\begin{example}
Let $X=\Set{1,2}$ and $Y=\Set{\vB, \vD}$. 
Then, the generating functions are as follows:  
\begin{align*}
\Phi_{X,Y, 3}&=x_{1\vB}+x_{1\vD}+x_{2\vB}+x_{2\vD}, \\
\Phi_{X,Y, 2}&=x_{1\vB}x_{1\vD}+x_{1\vB}x_{2\vB}+x_{1\vB}x_{2\vD}+x_{1\vD}x_{2\vB}+x_{1\vD}x_{2\vD}+x_{2\vB}x_{2\vD}, \\
\Phi_{X,Y, 1}&=x_{1\vB}x_{1\vD}x_{2\vB}+x_{1\vB}x_{1\vD}x_{2\vD}+x_{1\vB}x_{2\vB}x_{2\vD}+x_{1\vD}x_{2\vB}x_{2\vD}. 
\end{align*}
\end{example}

By definition, 
the generating function $\Phi_{X, Y, k}$ is a homogeneous polynomial of degree $\#X+\#Y-k$. 
Moreover, the generating function $\Phi_{X, Y, k}$ is a sum of square-free monomials. 

Let us consider the Hessian matrix $H_{\Phi_{X, Y, k}}$ and Hessian $\det H_{\Phi_{X, Y, k}}$ of the generating function $\Phi_{X, Y, k}$. 
We define $\widetilde H_{\Phi_{X, Y, k}}$ to be the special value of $H_{\Phi_{X, Y, k}}$ at $x_{e}=1$ for all $e$. 

\begin{example}
Let $X=\Set{1,2}$ and $Y=\Set{\vB, \vD}$. 
In the case of $\Phi_{X,Y, 2}$, we have 
\begin{align*}
\widetilde H_{\Phi_{X,Y, 2}}=
\begin{pmatrix}
0&1&1&1 \\
1&0&1&1 \\
1&1&0&1 \\
1&1&1&0 \\
\end{pmatrix}.  
\end{align*}
The eigenvalues of $\widetilde H_{\Phi_{X,Y, 2}}$ are $3, -1, -1, -1$. 

In the case of $\Phi_{X,Y, 1}$, we have 
\begin{align*}
\widetilde H_{\Phi_{X,Y, 1}}=
\begin{pmatrix}
0&2&2&2 \\
2&0&2&2 \\
2&2&0&2 \\
2&2&2&0 \\
\end{pmatrix}.  
\end{align*}
The eigenvalues of $\widetilde H_{\Phi_{X,Y, 1}}$ are $6,-2,-2,-2$. 
\end{example}

Note that the matrices $H_{\Phi_{X,Y,1}}$ and $\widetilde H_{\Phi_{X,Y, 1}}$ are always the zero matrices for any $X$ and $Y$.

\begin{Theorem}[Main theorem]\label{main theorem 2}
Consider sets $X$ and $Y$ such that $X\cap Y=\emptyset$, $\#X\geq 2$ and $\#Y\geq 2$.  
For $0<k<\#X+\#Y-2$,  
the determinant $\det\widetilde H_{\Phi_{X, Y, k}}$ does not vanish. 
Moreover, the matrix $\widetilde H_{\Phi_{X, Y, k}}$ has exactly one positive eigenvalue. 
\end{Theorem}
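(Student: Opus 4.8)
The plan is to mirror the structure of the proof of Theorem \ref{main theorem} for the complete graph, transposing each ingredient to the bipartite setting. The overarching strategy has two halves: first compute the eigenvalues of $\widetilde H_{\Phi_{X, Y, k}}$ in closed form using the symmetry of $K_{X, Y}$, and then show that each such eigenvalue is nonzero with exactly one being positive. I would set $\#X = m$, $\#Y = n$, so that the edge set is $X \times Y$, indexed by pairs $e = \{i, \bar{j}\}$ with $i \in X$, $\bar{j} \in Y$.

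For the eigenvalue computation, the first step is to establish a bipartite analogue of Lemma \ref{the entries}: the entries $h_{e, e'}$ of $\widetilde H_{\Phi_{X, Y, k}}$ depend only on the combinatorial type of the pair $(e, e')$ of edges of $K_{m, n}$. Since each edge joins $X$ to $Y$, two distinct edges can either share their $X$-endpoint, share their $Y$-endpoint, or be disjoint, giving three off-diagonal values (plus a zero diagonal, as $\Phi_{X, Y, k}$ is square-free). Using the action of $S_m \times S_n$ on $K_{m, n}$ by independently permuting $X$ and $Y$, the number of forests in $\FFF_{X, Y}^k$ containing a given pair of edges depends only on which of these three types the pair realizes; call the corresponding counts $p_1$ (shared $X$-vertex), $p_2$ (shared $Y$-vertex), and $q$ (disjoint). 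The matrix then has a tensor/block structure governed by $K_m$, $K_n$ and the all-ones matrices, and I would diagonalize it by decomposing $\RR^{X \times Y} \cong \RR^X \otimes \RR^Y$ into the tensor products of the eigenspaces of the relevant all-ones and identity pieces. This yields the eigenvalues as explicit linear combinations of $p_1, p_2, q$, with multiplicities $1$, $m-1$, $n-1$, and $(m-1)(n-1)$, replacing the single $-2p+q$ eigenvalue of the symmetric case by the two separate eigenvalues coming from the two sides.

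The second half, proving nonvanishing and the sign pattern, is where the real work lies and is the step I expect to be the main obstacle. The top eigenvalue (the Perron root, multiplicity one) is a positive combination of the positive counts $p_1, p_2, q$ and is therefore visibly positive, giving the ``exactly one positive eigenvalue'' claim once the remaining eigenvalues are shown negative. For the negativity I would seek bipartite analogues of the decomposition $p = 3t + f$, $q = 4t + f$ used in Proposition \ref{nonzero}: I would stratify each forest containing two prescribed edges according to whether those edges lie in a common tree component, express $p_1, p_2, q$ in terms of a small number of nonnegative ``building-block'' counts, and — critically — reprove the bijection of Lemma \ref{forestbij} in the bipartite setting, where the relevant edge-count formulas from \cite{MR0231755} take a different numerical form (the number of spanning trees of $K_{m,n}$ through a prescribed edge, path, or pair of edges scales with powers of $m$ and $n$ rather than of $m+n$). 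The delicate point is that, unlike the complete graph, the two off-diagonal types $p_1$ and $p_2$ are genuinely distinct whenever $m \neq n$, so I must verify that \emph{both} of the non-Perron eigenvalue expressions come out strictly negative under the resulting relations among the building-block counts; this asymmetry is precisely what makes the bipartite bookkeeping heavier than in the symmetric case.

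Once the entry-type lemma, the tensor diagonalization, and the sign analysis of the counts are in place, Theorem \ref{main theorem 2} follows by combining them exactly as Theorem \ref{main theorem} was assembled from Propositions \ref{eigenvalues} and \ref{nonzero}: the spectrum is explicit, every eigenvalue is nonzero so $\det \widetilde H_{\Phi_{X, Y, k}} \neq 0$, and precisely the multiplicity-one eigenvalue is positive.
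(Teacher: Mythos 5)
Your first half is sound and is exactly how the paper proceeds: the $S_m\times S_n$ symmetry argument is Lemma \ref{the entries 2}, and your tensor decomposition $\widetilde H_{\Phi_{X,Y,k}}=q\,(A_m\otimes I_n)+p\,(I_m\otimes A_n)+r\,(A_m\otimes A_n)$ (with $A_s=J_s-I_s$) recovers the four eigenvalues and multiplicities $1$, $m-1$, $n-1$, $(m-1)(n-1)$ of Lemma \ref{original eigenvalues 2} and Proposition \ref{eigenvalues 2}.

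The gap is in the sign analysis, exactly where you flagged ``the main obstacle,'' and the route you sketch does not run as described. The complete-graph argument works because Moon's formulas \eqref{Moon} give a ratio $\#\TTT_W''/\#\TTT_W'=4/3$ that is \emph{independent of $W$}; only this allows $p=3t+f$, $q=4t+f$ with one common pair $(t,f)$ after summing over $W$. In the bipartite setting no such $W$-independent ratios exist. Writing $(A,B,C)$ for the numbers of spanning trees of $K_{a,b}$ containing two edges sharing an $X$-vertex, two edges sharing a $Y$-vertex, and two disjoint edges respectively, one computes $(A,B,C)=(2,2,2)$ for $K_{2,2}$, $(5,4,5)$ for $K_{2,3}$, and $(21,21,24)$ for $K_{3,3}$: the ratios vary with both part sizes (closed forms, obtainable from the weighted matrix--tree theorem applied to the contracted graph, are $A=a^{b-3}b^{a-2}(2a+b-2)$, $B=a^{b-2}b^{a-3}(a+2b-2)$, $C=a^{b-3}b^{a-3}(a+b)(a+b-2)$). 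Consequently, summing over $W$ does not produce fixed positive combinations of common building blocks, and the final verification you defer has nothing to compute with. Worse, the comparisons are tight: for $m=2$ one has $p=r$ identically (e.g.\ $p=r=5$ for $K_{2,3}$, $k=1$), so no argument with slack can succeed, and negativity of $(n-1)p-q-(n-1)r=n(p-r)+(-p-q+r)$ must be routed through $-p-q+r<0$. (This equality case also shows that the strict inequality asserted in Lemma \ref{pr} fails for $m=2$, though Proposition \ref{nonzero 2} survives for exactly this reason.)

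What the paper actually does at this stage is abandon enumeration entirely: it proves $p\le r$, $q\le r$, $r<p+q$ (Lemmas \ref{pr}, \ref{qr'}, \ref{pqr}) by decomposing $P$, $Q$, $R$ according to which pieces contain the four distinguished vertices after the prescribed edges are deleted, and matching the pieces by cut-and-paste bijections --- delete one of the two edges meeting at the common vertex and insert the edge that makes the pair disjoint, together with a vertex transposition for the single stratum where this insertion would close a cycle (Lemmas \ref{pr123}, \ref{pr4}, \ref{q2r5}) --- and then concludes with the identities $(n-1)p-q-(n-1)r=n(p-r)+(-p-q+r)$ and $-p+(m-1)q-(m-1)r=m(q-r)+(-p-q+r)$. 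Your plan could in fact be salvaged along the lines you indicate --- the closed forms above give $C-A\ge 0$, $C-B\ge 0$, $A+B-C>0$ termwise, and bipartite analogues of the bijection in Lemma \ref{forestbij} handle the two-component strata --- but that salvage (the formulas, the bijections, and the termwise assembly) is precisely the content missing from your proposal, so as written the negativity of the non-Perron eigenvalues, and hence both assertions of the theorem, remain unproved.
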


Now, we prove Theorem \ref{main theorem 2}. 
The first step is to compute the eigenvalues of $\widetilde H_{\Phi_{X, Y, k}}$ (Proposition \ref{eigenvalues 2}). 
The second step is to show that each eigenvalue of $\widetilde H_{\Phi_{X, Y, k}}$ is non-zero (Proposition \ref{nonzero 2}). 

The eigenvalues of $\widetilde H_{\Phi_{X, Y, k}}$ are known for $k=1$ \cite[Proposition 3.15]{Y2018}. 
The proof of \cite[Proposition 3.15]{Y2018} can be generalized as follows. 

\begin{Lemma}\label{original eigenvalues 2}
Let $E$ be the set of the edges of the complete bipartite graph $K_{X, Y}$, $\#X=m$ and $\#Y=n$. 
Let $H=\left(h_{e, e'}\right)_{e, e'\in E}$ be the matrix defined by 
\begin{align*}
h_{e, e'}=
\begin{cases}
\alpha, & e=e', \\
\beta, & e\cap e'\in X, \\
\gamma, & e\cap e'\in Y, \\
\delta, & e\cap e'=\emptyset. 
\end{cases}
\end{align*}
The eigenvalues of $H$ are 
\begin{align*}
\alpha+(n-1)\beta+(m-1)\gamma+(m-1)(n-1)\delta,\\
\alpha+(n-1)\beta-\gamma-(n-1)\delta, \\
\alpha-\beta+(m-1)\gamma-(m-1)\delta, \\
\alpha-\beta-\gamma+\delta. 
\end{align*}
The dimensions of the eigenspaces of $H$ associate with 
$
\alpha+(n-1)\beta+(m-1)\gamma+(m-1)(n-1)\delta,
\alpha+(n-1)\beta-\gamma-(n-1)\delta, 
\alpha-\beta+(m-1)\gamma-(m-1)\delta
$ 
and 
$\alpha-\beta-\gamma+\delta
$ 
are
\begin{align*}
1, &&
m-1, &&
n-1, &&
(m-1)(n-1),  
\end{align*}
respectively. 
\end{Lemma}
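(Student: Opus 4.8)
The plan is to exploit the Kronecker-product structure hidden in $H$. First I would identify each edge of $K_{X,Y}$ with a pair $(i,\bar j)\in X\times Y$, so that the index set $E$ becomes $X\times Y$ and $\RR^{E}\cong\RR^{X}\otimes\RR^{Y}$, a space of dimension $mn$. Under this identification the four cases defining $h_{e,e'}$ become conditions on the two coordinates separately: $e=e'$ means $i=i'$ and $j=j'$; $e\cap e'\in X$ means $i=i'$ but $j\neq j'$; $e\cap e'\in Y$ means $j=j'$ but $i\neq i'$; and $e\cap e'=\emptyset$ means $i\neq i'$ and $j\neq j'$. Writing $I_{m},J_{m}$ for the identity and all-ones matrices of size $m$ (and likewise for $n$), this lets me express
\begin{align*}
H=\alpha\,I_{m}\otimes I_{n}+\beta\,I_{m}\otimes(J_{n}-I_{n})+\gamma\,(J_{m}-I_{m})\otimes I_{n}+\delta\,(J_{m}-I_{m})\otimes(J_{n}-I_{n}),
\end{align*}
since $I$ records the ``equal coordinate'' pattern and $J-I$ records the ``distinct coordinate'' pattern in each factor.

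Next I would diagonalise all four summands simultaneously. The matrix $J_{m}$ has the all-ones vector $\boldsymbol{1}_{m}$ as an eigenvector with eigenvalue $m$, and the orthogonal complement $\boldsymbol{1}_{m}^{\perp}$ (of dimension $m-1$) as its $0$-eigenspace; hence $J_{m}-I_{m}$ has eigenvalue $m-1$ on $\boldsymbol{1}_{m}$ and $-1$ on $\boldsymbol{1}_{m}^{\perp}$, and similarly for $n$. Because every summand is a Kronecker product of matrices that are diagonal in these bases, the vectors $v\otimes w$, where $v$ is either $\boldsymbol{1}_{m}$ or ranges over a basis of $\boldsymbol{1}_{m}^{\perp}$ and $w$ is either $\boldsymbol{1}_{n}$ or ranges over a basis of $\boldsymbol{1}_{n}^{\perp}$, form a complete system of eigenvectors of $H$. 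Evaluating the four summands on each of the four choices and adding gives exactly the four eigenvalues in the statement: $v=\boldsymbol{1}_{m},w=\boldsymbol{1}_{n}$ yields $\alpha+(n-1)\beta+(m-1)\gamma+(m-1)(n-1)\delta$ with multiplicity $1$; $v\in\boldsymbol{1}_{m}^{\perp},w=\boldsymbol{1}_{n}$ yields $\alpha+(n-1)\beta-\gamma-(n-1)\delta$ with multiplicity $m-1$; $v=\boldsymbol{1}_{m},w\in\boldsymbol{1}_{n}^{\perp}$ yields $\alpha-\beta+(m-1)\gamma-(m-1)\delta$ with multiplicity $n-1$; and $v\in\boldsymbol{1}_{m}^{\perp},w\in\boldsymbol{1}_{n}^{\perp}$ yields $\alpha-\beta-\gamma+\delta$ with multiplicity $(m-1)(n-1)$.

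The only place requiring care, and the step I expect to be the main (if modest) obstacle, is the bookkeeping in the first paragraph: verifying that the combinatorial intersection conditions ($e\cap e'\in X$ versus $e\cap e'\in Y$ versus $e\cap e'=\emptyset$) land on the correct factor of the Kronecker pattern, and confirming that the multiplicities $1,\,m-1,\,n-1,\,(m-1)(n-1)$ sum to $mn=\#E$, which they do. Everything after the decomposition is just the standard spectrum of $J$ propagated through tensor products, so no genuine estimate arises; this is precisely why the $k=1$ argument of \cite[Proposition 3.15]{Y2018} generalises once one observes that the entries depend only on the pattern of shared vertices rather than on any further structure of the Hessian.
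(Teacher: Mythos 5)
Your proof is correct and complete as written. Note that the paper itself gives no proof of this lemma: it is stated with only the remark that the proof of \cite[Proposition 3.15]{Y2018} ``can be generalized,'' so your argument supplies in full what the paper leaves to the reader. The route you take---identifying $\RR^{E}$ with $\RR^{X}\otimes\RR^{Y}$ and writing
\[
H=\alpha\, I_{m}\otimes I_{n}+\beta\, I_{m}\otimes(J_{n}-I_{n})+\gamma\,(J_{m}-I_{m})\otimes I_{n}+\delta\,(J_{m}-I_{m})\otimes(J_{n}-I_{n})
\]
---is the cleanest way to organize the computation: the four summands are simultaneously diagonalized by the tensor basis built from $\boldsymbol{1}$ and $\boldsymbol{1}^{\perp}$ in each factor, so the four eigenvalues and their multiplicities drop out mechanically, and the count $1+(m-1)+(n-1)+(m-1)(n-1)=mn$ certifies that you have exhibited a full eigenbasis. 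The eigenspace decomposition you obtain, namely $\boldsymbol{1}_{m}\otimes\boldsymbol{1}_{n}$, $\boldsymbol{1}_{m}^{\perp}\otimes\boldsymbol{1}_{n}$, $\boldsymbol{1}_{m}\otimes\boldsymbol{1}_{n}^{\perp}$, $\boldsymbol{1}_{m}^{\perp}\otimes\boldsymbol{1}_{n}^{\perp}$, is the same one that underlies an explicit-eigenvector verification in the style of the cited source (vectors constant on the edges at a fixed vertex, signed differences of such, and ``rectangle'' vectors $e_{ij}-e_{i'j}-e_{ij'}+e_{i'j'}$); what the Kronecker packaging buys is that no eigenvector equation needs to be checked entry by entry, the bookkeeping of which intersection condition ($e\cap e'$ in $X$, in $Y$, or empty) goes with which tensor factor is isolated in a single displayed identity, and the same template immediately re-proves Lemma \ref{original eigenvalues} for the complete graph (there with the symmetric square in place of the tensor square), so the argument generalizes rather than merely verifies.
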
 

Fix 
\begin{align*}
X&=\Set{\vA, \vC, \ldots, \vM}, \\
Y&=\Set{\vB, \vD, \ldots, \vN}, 
\end{align*}
and $m, n\geq 2$. 
By definition, we have $\#X=m$ and $\#Y=n$. 
Define
\begin{align*}
P&=\Set{F\in\FFF_{X, Y}^{k}|\Set{\vA, \vB}, \Set{\vA, \vD}\in E(F)}, \\
Q&=\Set{F\in\FFF_{X, Y}^{k}|\Set{\vA, \vB}, \Set{\vB, \vC}\in E(F)}, \\
R&=\Set{F\in\FFF_{X, Y}^{k}|\Set{\vA, \vB}, \Set{\vC, \vD}\in E(F)},  
\end{align*}
and 
\begin{align*}
p=\#P, && q=\#Q, && r=\#R. 
\end{align*}

\begin{Lemma}\label{the entries 2}
We set $\widetilde H_{\Phi_{X, Y, k}}=(h_{e, e'})$. 
Then, we have 
\begin{align*}
h_{e,e'}=
\begin{cases}
0, & e=e',  \\
p, & e\cap e'\in X,  \\
q, & e\cap e'\in Y,  \\
r, & e\cap e'=\emptyset. 
\end{cases}
\end{align*}
\end{Lemma}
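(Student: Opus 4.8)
The plan is to mirror the proof of Lemma \ref{the entries}, replacing the symmetric group $S_{n}$ of the complete graph by the product $S_{m}\times S_{n}$ acting on $X=\Set{1,2,\ldots,m}$ and $Y=\Set{\bar{1},\bar{2},\ldots,\bar{n}}$ separately. First I would record that, because $\Phi_{X, Y, k}$ is a sum of square-free monomials in the variables $x_{e}$, the entry $h_{e,e'}$ of $\widetilde H_{\Phi_{X, Y, k}}$ equals, for $e\neq e'$, the number of forests $F\in\FFF_{X, Y}^{k}$ with $e,e'\in E(F)$, while for $e=e'$ it vanishes, since no monomial is divisible by $x_{e}^{2}$. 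This already settles the diagonal case $h_{e,e}=0$.

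Next I would exploit the symmetry of $K_{X, Y}$. Each pair $(\sigma,\tau)\in S_{m}\times S_{n}$ induces a map $V\to V$ fixing $X$ and $Y$ setwise, hence a graph automorphism of $K_{X, Y}$; this automorphism permutes the edge set and carries $\FFF_{X, Y}^{k}$ bijectively onto itself, as it preserves adjacency, the number of connected components, and the bipartition. Consequently the count of forests containing a given pair of edges depends only on the orbit of the unordered pair $\Set{e,e'}$ under $S_{m}\times S_{n}$. Because every edge of $K_{X, Y}$ has exactly one endpoint in $X$ and one in $Y$, two distinct edges $e,e'$ fall into exactly one of three classes: they share their $X$-endpoint ($e\cap e'\in X$), they share their $Y$-endpoint ($e\cap e'\in Y$), or they are disjoint ($e\cap e'=\emptyset$); no other intersection pattern can occur. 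I would then check that $S_{m}\times S_{n}$ acts transitively on the set of unordered pairs of distinct edges within each of these three classes, using $m,n\geq 2$ to guarantee the distinct vertices needed for the representatives. Choosing the representatives $\Set{\Set{1,\bar{1}},\Set{1,\bar{2}}}$, $\Set{\Set{1,\bar{1}},\Set{2,\bar{1}}}$, and $\Set{\Set{1,\bar{1}},\Set{2,\bar{2}}}$ identifies the three counts with $p=\#P$, $q=\#Q$, and $r=\#R$, respectively, yielding the three off-diagonal values.

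The only real content beyond Lemma \ref{the entries} is the trichotomy of intersection types: in the bipartite setting, sharing a vertex splits into the $X$-case and the $Y$-case, and these are genuinely different orbits, since an automorphism arising from $S_{m}\times S_{n}$ preserves the bipartition and hence cannot interchange an $X$-vertex with a $Y$-vertex. They must therefore be assigned the distinct entries $p$ and $q$. The transitivity verifications themselves are routine, but the point I would be careful about is confirming that each of the three classes is nonempty and forms a single orbit, so that the assignment $h_{e,e'}\in\Set{0,p,q,r}$ is both exhaustive and well-defined.
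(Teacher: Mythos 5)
Your proposal is correct and follows essentially the same route as the paper: the paper's proof likewise notes that the diagonal vanishes because $\Phi_{X, Y, k}$ is square-free, and then invokes the automorphism of $K_{X,Y}$ induced by any $(\sigma,\tau)\in S_{m}\times S_{n}$ to reduce to the representative edge pairs defining $P$, $Q$, $R$, exactly as in Lemma \ref{the entries}. Your write-up merely makes explicit the details the paper leaves implicit (the orbit trichotomy, transitivity on each class, and that the bipartition-preserving action keeps the $X$-sharing and $Y$-sharing cases in distinct orbits), all of which are consistent with the paper's argument.
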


\begin{proof}
Since $\Phi_{X, Y, k}$ is a sum of square-free monomials, each diagonal component of $\widetilde H_{\Phi_{X, Y, k}}$ is zero. 

For any $(\sigma, \tau)\in S_{m}\times S_{n}$, consider a map on the vertex set $X\sqcup Y$ such that 
\begin{align*}
X\ni\vX\mapsto \sigma(\vX), && Y\ni\vY\mapsto \overline{\tau(j)}. 
\end{align*}
The map induces an automorphism of $K_{X,Y}$. 
Similarly to Lemma \ref{the entries}, we can prove Lemma \ref{the entries 2}. 
\end{proof}

As a corollary of Lemma \ref{original eigenvalues 2}, we obtain the following. 

\begin{Prop}\label{eigenvalues 2}
For $0<k<m+n-2$, 
the eigenvalues of $\widetilde H_{\Phi_{X, Y, k}}$ are 
\begin{align*}
(n-1)p+(m-1)q+(m-1)(n-1)r,\\
(n-1)p-q-(n-1)r, \\
-p+(m-1)q-(m-1)r, \\
-p-q+r. 
\end{align*}
The dimensions of the eigenspaces of $\widetilde H_{\Phi_{X, Y, k}}$ associate with 
$(n-1)p+(m-1)q+(m-1)(n-1)r, (n-1)p-q-(n-1)r,-p+(m-1)q-(m-1)r$ and $-p-q+r$ are
\begin{align*}
1, &&
m-1, &&
n-1, &&
(m-1)(n-1),  
\end{align*}
respectively. 
\end{Prop}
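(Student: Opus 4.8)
The plan is to obtain Proposition \ref{eigenvalues 2} as a direct corollary of Lemma \ref{original eigenvalues 2}, with Lemma \ref{the entries 2} serving as the bridge between the two statements. First I would observe that, by Lemma \ref{the entries 2}, the matrix $\widetilde H_{\Phi_{X, Y, k}}$ is literally an instance of the matrix $H$ of Lemma \ref{original eigenvalues 2}: its entry $h_{e,e'}$ depends only on the intersection type of $e$ and $e'$, and the four cases $e = e'$, $e \cap e' \in X$, $e \cap e' \in Y$, $e \cap e' = \emptyset$ match the four cases defining $H$ under the specialization $\alpha = 0$, $\beta = p$, $\gamma = q$, $\delta = r$.

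With this identification in hand, the eigenvalues are read off by substituting $\alpha = 0$, $\beta = p$, $\gamma = q$, $\delta = r$ into the four eigenvalue expressions supplied by Lemma \ref{original eigenvalues 2}. This produces $(n-1)p + (m-1)q + (m-1)(n-1)r$, $(n-1)p - q - (n-1)r$, $-p + (m-1)q - (m-1)r$, and $-p - q + r$, exactly as claimed. The eigenspace dimensions $1$, $m-1$, $n-1$, $(m-1)(n-1)$ transfer verbatim, since Lemma \ref{original eigenvalues 2} determines them from the combinatorial shape of $H$ alone (that is, from $m$ and $n$), independently of the numerical values of the entries; as a sanity check they sum to $1 + (m-1) + (n-1) + (m-1)(n-1) = mn$, the number of edges of $K_{X, Y}$ and hence the size of the matrix.

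Consequently there is no real obstacle internal to this statement: the substantive work, namely diagonalizing a matrix whose entries are constant on each intersection type, is already packaged in Lemma \ref{original eigenvalues 2}, and the hypothesis $0 < k < m + n - 2$ (inherited from the ambient Theorem \ref{main theorem 2}) merely fixes the regime of interest, while the substitution itself is valid wherever Lemma \ref{the entries 2} applies. The one point I would be careful to state is that the listed quantities are the eigenvalues together with their eigenspace dimensions even when two of them happen to coincide numerically, in which case the corresponding eigenspaces simply combine; this does not affect the direct-sum decomposition. The genuinely harder content — that these four eigenvalues are nonzero and that precisely one of them is positive — is not part of this proposition and is handled separately in Proposition \ref{nonzero 2}.
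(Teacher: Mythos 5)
Your proposal matches the paper's own proof: the paper derives Proposition \ref{eigenvalues 2} precisely as a corollary of Lemma \ref{original eigenvalues 2} via Lemma \ref{the entries 2}, i.e.\ by the substitution $\alpha=0$, $\beta=p$, $\gamma=q$, $\delta=r$, exactly as you describe. Your added remarks on the eigenspace dimensions summing to $mn$ and on possibly coinciding eigenvalues are correct refinements but do not change the argument.
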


Next, we show that each eigenvalue of $\widetilde H_{\Phi_{X, Y, k}}$ is non-zero. 
Define  
\begin{align*}
Z&=\Set{F\in \FFF_{X, Y}^{k}|\Set{\vA, \vB}, \Set{\vA, \vD}, \Set{\vC,\vD}\in E(F)}, \\
P'&=\Set{F\in \FFF_{X, Y}^{k}|\Set{\vA, \vB}, \Set{\vA, \vD}\in E(F), \Set{\vC,\vD}\not\in E(F)}, \\
R'&=\Set{F\in \FFF_{X, Y}^{k}|\Set{\vA, \vB}, \Set{\vC, \vD}\in E(F), \Set{\vA,\vD}\not\in E(F)}. 
\end{align*}
Note that $Z=P\cap R$, $P'=P\setminus Z$ and $R'=R\setminus Z$. 
Then, the sets $P$ and $R$ are decomposed into
\begin{align*}
P=Z\sqcup P', &&
R=Z\sqcup R', 
\end{align*}
respectively. 
Let $F\in P'$. 
Consider the component $T$ of $F$ with the vertex $\vA$.  
Note that the vertices $\vB$ and $\vD$ are in $T$. 
If we remove the edges $\Set{\vA, \vB}$ and $\Set{\vA, \vD}$ from $T$, 
then the tree $T$ is decomposed into three trees. 
One of them contains the vertex $\vA$, denoted by $F_{\vA}$. 
One of them contains the vertex $\vB$, denoted by $\overline{F_{1}}$. 
One of them contains the vertex $\vD$, denoted by $\overline{F_{2}}$. 
Let 
\begin{align*}
P_{1}&=\Set{F\in P'|\vC\in F_{\vA}},  \\
P_{2}&=\Set{F\in P'|\vC\in \overline{F_{1}}},  \\
P_{3}&=\Set{F\in P'|\vC\not\in F_{\vA}, \vC\not\in \overline{F_{1}}, \vC\not\in \overline{F_{2}}}, \\ 
P_{4}&=\Set{F\in P'|\vC\in \overline{F_{2}}}. 
\end{align*}
Then we have a decomposition
\begin{align*}
P=Z\sqcup P_{1}\sqcup P_{2}\sqcup P_{3}\sqcup P_{4}. 
\end{align*}
Let $F\in R'$. 
Consider the component $T$ of $F$ with the vertex $\vA$. 
Note that $\vB\in T$. 
If we remove the edge $\Set{\vA, \vB}$ from $T$, 
then the tree $T$ is decomposed into two trees. 
One of them contains the vertex $\vA$, denoted by $F_{\vA}$. 
The other one contains the vertex $\vB$, denoted by $\overline{F_{1}}$. 
Let 
\begin{align*}
R_{1}&=\Set{F\in R'|\vC\in F_{\vA}, \vD\not\in F_{\vA}, \vD\not\in \overline{F_{1}}},  \\
R_{2}&=\Set{F\in R'|\vC\in \overline{F_{1}}, \vD\not\in F_{\vA}, \vD\not\in \overline{F_{1}}},  \\
R_{3}&=\Set{F\in R'|\vC\not\in F_{\vA}, \vC\not\in \overline{F_{1}}, \vD\not\in F_{\vA}, \vD\not\in \overline{F_{1}}}, \\
R_{4}&=\Set{F\in R'|\vC\not\in F_{\vA}, \vC\not\in \overline{{F_{1}}}, \vD\in F_{\vA}}, \\
R_{5}&=\Set{F\in R'|\vC\not\in F_{\vA}, \vC\not\in \overline{F_{1}}, \vD\in \overline{F_{1}}}. 
\end{align*}
Then we have a decomposition
\begin{align*}
R=Z\sqcup R_{1}\sqcup R_{2}\sqcup R_{3}\sqcup R_{4}\sqcup R_{5}. 
\end{align*}

\begin{Lemma}\label{pr123}
We have $\#P_{1}=\#R_{1}$, $\#P_{2}=\#R_{2}$ and $\#P_{3}=\#R_{3}$. 
\end{Lemma}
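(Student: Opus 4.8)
The plan is to realize each equality by an explicit bijection, in exactly the spirit of the proof of Lemma \ref{forestbij}. The natural operation is the edge swap replacing the edge $\Set{\vA, \vD}$ by the edge $\Set{\vC, \vD}$. Concretely, I would define a map $\phi$ on $P'$ by $\phi(F)=(F\setminus\Set{\vA,\vD})\cup\Set{\vC,\vD}$ and a candidate inverse $\psi$ on $R'$ by $\psi(F)=(F\setminus\Set{\vC,\vD})\cup\Set{\vA,\vD}$; both swapped edges join $X$ to $Y$, so the bipartite constraint defining $\FFF_{X,Y}^{k}$ is automatically respected. Since $F$ is a forest, deleting the bridge $\Set{\vA,\vD}$ raises the number of components by one, while inserting $\Set{\vC,\vD}$ lowers it by one precisely when its two endpoints lie in different components after the deletion; thus $\phi$ preserves membership in $\FFF_{X,Y}^{k}$ exactly when no cycle is created. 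The first thing to check is therefore that on each of $P_{1},P_{2},P_{3}$ the vertex $\vC$ does not lie in the tree $\overline{F_{2}}$ carrying $\vD$ (this is what excludes $P_{4}$), so that after deleting $\Set{\vA,\vD}$ the vertices $\vC$ and $\vD$ sit in distinct trees and $\phi(F)$ is again a forest with $k$ components; the forbidden-edge condition for $R'$, namely $\Set{\vA,\vD}\notin E(\phi(F))$, holds because that edge has just been removed.

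Next I would track, for $F$ in each piece, where $\vC$ and $\vD$ sit in the decomposition of the $\vA$-component of $\phi(F)$ obtained by deleting $\Set{\vA,\vB}$. The key observation is that the swap detaches $\overline{F_{2}}$ from $\vA$ and reattaches it to $\vC$: if $\vC\in F_{\vA}$, then after the swap $\overline{F_{2}}$ hangs off the $\vA$-side, so both $\vC$ and $\vD$ land there and $\phi(F)\in R_{1}$; if $\vC\in\overline{F_{1}}$, then $\overline{F_{2}}$ hangs off the $\vB$-side and $\phi(F)\in R_{2}$; and if $\vC$ lies in a component disjoint from the $\vA$-component, then $\overline{F_{2}}$ is carried into that separate component and $\phi(F)\in R_{3}$. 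This gives the inclusions $\phi(P_{1})\subseteq R_{1}$, $\phi(P_{2})\subseteq R_{2}$, and $\phi(P_{3})\subseteq R_{3}$.

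I would then verify symmetrically that $\psi$ sends $R_{i}$ into $P_{i}$ for $i=1,2,3$: deleting $\Set{\vC,\vD}$ and inserting $\Set{\vA,\vD}$ detaches the $\vD$-tree from $\vC$ and reattaches it to $\vA$, and the position of $\vC$ relative to the pieces $F_{\vA}$ and $\overline{F_{1}}$ is exactly what records the three cases. Because $\phi$ and $\psi$ are each given by removing one fixed edge and inserting another, they are mutually inverse wherever both are defined, so the inclusions above upgrade to bijections $P_{1}\cong R_{1}$, $P_{2}\cong R_{2}$, $P_{3}\cong R_{3}$, which are precisely the three asserted cardinality equalities.

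The main obstacle I expect is the bookkeeping in the middle step. One must argue carefully that inserting $\Set{\vC,\vD}$ never produces a cycle on $P_{1},P_{2},P_{3}$ (this is where the hypothesis $\vC\notin\overline{F_{2}}$ is essential) and that the reattached tree lands on the correct side, so that the refined position of $\vC$ is faithfully preserved under both $\phi$ and $\psi$. Simultaneously controlling the two required edges, the single forbidden edge, and the exact number of components $k$ through both maps is the delicate part; once the forest structure is shown to be respected, matching each $P_{i}$ with the corresponding $R_{i}$ reduces to a direct case check.
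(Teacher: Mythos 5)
Your strategy is the same as the paper's: exhibit the edge swap on each piece and its inverse. In fact your choice of swap is the correct one, and the paper's printed proof appears to contain a typo at exactly this point: it removes $\Set{\vA,\vD}$ and adds $\Set{\vB,\vC}$, whose output contains $\Set{\vA,\vB}$ and $\Set{\vB,\vC}$ but not $\Set{\vC,\vD}$, hence lies in $Q'$ rather than in $R'$; your swap $\Set{\vA,\vD}\mapsto\Set{\vC,\vD}$ is what the argument requires. Your check that $\phi$ is well defined on $P_{1}\sqcup P_{2}\sqcup P_{3}$ (no cycle is created precisely because $\vC\notin\overline{F_{2}}$, and the component count returns to $k$) is also right.

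There is, however, a genuine gap in how you identify the images. You conclude $\phi(F)\in R_{1}$ on the grounds that ``both $\vC$ and $\vD$ land on the $\vA$-side'' after deleting $\Set{\vA,\vB}$ (and similarly for $R_{2}$). But that cannot serve as the defining condition of $R_{1}$: note first that the paper's displayed conditions for $R_{1},R_{2},R_{4},R_{5}$ are vacuous if $F_{\vA}$ and $\overline{F_{1}}$ come from deleting only $\Set{\vA,\vB}$, since $\vC$ and $\vD$ are joined by the edge $\Set{\vC,\vD}$ in every $F\in R'$ and so can never be separated; the conditions are only meaningful when $F_{\vA}$ and $\overline{F_{1}}$ are the trees obtained after deleting both $\Set{\vA,\vB}$ and $\Set{\vC,\vD}$. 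Under that reading, ``both $\vC,\vD$ on the $\vA$-side'' is the union of two distinct cells: $R_{1}$ ($\vC$ stays with $\vA$ and $\vD$ splits off) and $R_{4}$ ($\vD$ stays with $\vA$ and $\vC$ splits off). Your $\phi$ lands only in $R_{1}$ --- in $\phi(F)$ the tree $\overline{F_{2}}\ni\vD$ is attached to the rest only through the inserted edge $\Set{\vC,\vD}$ --- and this finer fact must be recorded, because on the $R_{4}$-part your inverse $\psi$ is not even defined: there, deleting $\Set{\vC,\vD}$ leaves $\vA$ and $\vD$ in one tree, so inserting $\Set{\vA,\vD}$ creates a cycle. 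The distinction is not pedantic: $R_{4}$ is matched with $P_{4}$ by the different bijection of Lemma \ref{pr4}, and if $\#P_{1}$ equaled $\#(R_{1}\cup R_{4})$ then the strict inequality $\#P<\#R$ of Lemma \ref{pr} would fail; the same conflation occurs between $R_{2}$ and $R_{5}$ (the latter is matched with $Q_{2}$ in Lemma \ref{q2r5}). The repair is local: characterize $\phi(P_{1})$ by ``$\vC\in F_{\vA}$, and after deleting $\Set{\vC,\vD}$ the tree containing $\vD$ contains neither $\vA$ nor $\vB$,'' and analogously for $R_{2}$ and $R_{3}$; with that, your $\phi$ and $\psi$ are indeed mutually inverse bijections $P_{i}\leftrightarrow R_{i}$ for $i=1,2,3$.
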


\begin{proof}
For $1\leq i\leq 3$, we define each map $f_{i}$ from $P_{i}$ to $R_{i}$ in the following manner: 
Let $F\in P_{i}$. 
Define $f_{i}(F)$ to be the forest removing the edge $\Set{\vA, \vD}$ from $F$ and adding the edge $\Set{\vB,\vC}$. 
Then $f_{i}(F)\in R_{i}$. 
We define each map $g_{i}$ from $R_{i}$ to $P_{i}$ in the following manner: 
Let $F\in R_{i}$. 
Define $g_{i}(F)$ to be the forest removing the edge $\Set{\vB, \vC}$ from $F$ and adding the edge $\Set{\vA,\vD}$. 
Then $g_{i}(F)\in P_{i}$. 
The maps $f_{i}$ and $g_{i}$ are inverses of each other.  
\end{proof}

\begin{Lemma}\label{pr4}
We have $\#P_{4}=\#R_{4}$. 
\end{Lemma}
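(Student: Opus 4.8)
The plan is to prove $\#P_{4}=\#R_{4}$ by exhibiting a bijection, exactly as in Lemma \ref{pr123}, but with a surgery adapted to the position of $\vC$. Every element of $R'$ must contain the edge $\Set{\vC,\vD}$ and must not contain $\Set{\vA,\vD}$, so any map $P_{4}\to R_{4}$ is forced to delete $\Set{\vA,\vD}$ and to create $\Set{\vC,\vD}$. For $F\in P_{4}$ the obstruction, absent in the cases $i\le 3$, is that $\vC$ lies in the subtree $\overline{F_{2}}$ \emph{together with} $\vD$; hence deleting $\Set{\vA,\vD}$ and simply inserting $\Set{\vC,\vD}$ would close the unique path joining $\vD$ and $\vC$ inside $\overline{F_{2}}$ into a cycle. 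The remedy is a two-edge rotation along that path.

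Concretely, I would proceed as follows. For $F\in P_{4}$ let $w$ be the neighbour of $\vC$ on the unique path joining $\vD$ and $\vC$ in $\overline{F_{2}}$; since the graph is bipartite and $\Set{\vC,\vD}\notin E(F)$, we have $w\in Y$ and $w\neq \vB,\vD$. Define $f_{4}(F)$ to be the forest on the same vertex set with
\[
E(f_{4}(F))=\bigl(E(F)\setminus\{\{\vA,\vD\},\{w,\vC\}\}\bigr)\cup\{\{\vC,\vD\},\{\vA,w\}\}.
\]
The first step is to verify that $f_{4}$ is well defined. Deleting $\Set{\vA,\vD}$ and $\Set{w,\vC}$ cuts the component of $\vA$ into three trees—one containing $\vA$ and $\vB$, one containing $\vD$ and $w$, and one containing $\vC$—and the two inserted edges $\Set{\vA,w}$, $\Set{\vC,\vD}$ join these three trees back into a single component, so $f_{4}(F)$ is again a forest with exactly $k$ components. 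By construction it contains $\Set{\vA,\vB}$ and $\Set{\vC,\vD}$ but not $\Set{\vA,\vD}$, so $f_{4}(F)\in R'$; and inspecting where $\vC$ and $\vD$ sit after deleting $\Set{\vA,\vB}$ confirms that $f_{4}(F)$ meets the defining conditions of $R_{4}$.

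Next I would write down the inverse. For $F'\in R_{4}$ let $w$ be the neighbour of $\vA$ on the unique path from $\vA$ to $\vC$ in $F'$; this recovers the same vertex, since in $f_{4}(F)$ that path runs $\vA,w,\dots,\vD,\vC$. Set $g_{4}(F')$ to be obtained from $F'$ by deleting $\Set{\vA,w}$ and $\Set{\vC,\vD}$ and inserting $\Set{\vA,\vD}$ and $\Set{w,\vC}$. The same component count shows $g_{4}(F')\in P_{4}$, and $f_{4}$, $g_{4}$ are then mutually inverse.

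The hard part will be the bookkeeping that makes $f_{4}$ and $g_{4}$ genuinely inverse to each other: one has to check that the distinguished vertex $w$ is characterised intrinsically on both sides—as the neighbour of $\vC$ towards $\vD$ in $F$, and as the neighbour of $\vA$ towards $\vC$ in $F'$—and that at each stage the edge being inserted joins two distinct trees, so that no cycle is ever formed and the number of components stays equal to $k$. Granting this, $f_{4}$ is a bijection and $\#P_{4}=\#R_{4}$.
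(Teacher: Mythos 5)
Your bijection is correct, but it is genuinely different from the paper's, so let me compare the two. The paper's map $h\colon P_{4}\to R_{4}$ is global: it first relabels $F$ by the transposition $\sigma=(\vA\ \vC)$, which is an automorphism of $K_{X,Y}$ because $\vA$ and $\vC$ both lie in $X$, and then exchanges the edge $\Set{\vB,\vC}=\sigma(\Set{\vA,\vB})$ for $\Set{\vA,\vB}$; the hypothesis $\Set{\vC,\vD}\notin E(F)$ enters exactly to ensure that $\sigma(F)$, hence $h(F)$, avoids $\Set{\vA,\vD}$, and the inverse $h'$ is given by the very same formula, so invertibility is immediate. Your map is instead local: it fixes every edge outside $\Set{\vA,\vD},\Set{w,\vC},\Set{\vC,\vD},\Set{\vA,w}$ and rotates along the $\vD$--$\vC$ path via the auxiliary vertex $w$. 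The two maps really do differ: under $\sigma$ the whole subtree of $\overline{F_{2}}$ hanging at $\vC$ gets transplanted onto $\vA$, whereas your surgery leaves it attached to $\vC$; so yours is not a disguised version of the paper's. What the relabelling trick buys is brevity --- well-definedness reduces to ``automorphism plus one edge exchange between distinct components,'' with no distinguished vertex to track. What your rotation buys is that it stays in the edge-exchange spirit of Lemma \ref{pr123}: it literally deletes $\Set{\vA,\vD}$ and creates $\Set{\vC,\vD}$, with the extra swap $\Set{w,\vC}\leftrightarrow\Set{\vA,w}$ repairing precisely the cycle obstruction that makes $P_{4}$ exceptional, so it also explains \emph{why} this case needs separate treatment. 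The bookkeeping you flag does go through: $w$ is intrinsically characterised on both sides (neighbour of $\vC$ towards $\vD$ in $F$, neighbour of $\vA$ towards $\vC$ in the image, since that path runs $\vA,w,\dots,\vD,\vC$), and each inserted edge joins two distinct trees of your three-tree decomposition, so no cycle arises and the number of components stays $k$. One small touch-up: $w\neq\vB$ follows not from bipartiteness but from $w\in\overline{F_{2}}$ and $\vB\in\overline{F_{1}}$ being disjoint trees; bipartiteness gives $w\in Y$, and $\Set{\vC,\vD}\notin E(F)$ gives $w\neq\vD$. With that adjustment your argument is a complete, valid alternative proof.
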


\begin{proof}
We define a map $h$ from $P_{4}$ to $R_{4}$ in the following manner: 
Let $F\in P_{4}$. 
Let $F'$ be the forest such that transpose the vertices $\vA$ and $\vC$ of $F$. 
Note that the vertices $\vA$ and $\vD$ in $F'$ are not connected by an edge since the vertices $\vC$ and $\vD$ in $F$ are not connected by an edge. 
Define $h(F)$ to be the forest removing the edge $\Set{\vB, \vC}$ from $F'$ and adding the edge $\Set{\vA,\vB}$. 
Then $h(F)\in R_{4}$. 
We define a map $h'$ from $R_{4}$ to $P_{4}$ in the following manner: 
Let $F\in R_{4}$. 
Let $F'$ be the forest such that transpose the vertices $\vA$ and $\vC$ of $F$. 
Note that the vertices $\vC$ and $\vD$ in $F'$ are not connected by an edge since the vertices $\vA$ and $\vD$ in $F$ are not connected by an edge. 
Define $h'(F)$ to be the forest removing the edge $\Set{\vB, \vC}$ from $F'$ and adding the edge $\Set{\vA,\vB}$. 
Then $h'(F)\in P_{4}$. 
The maps $h$ and $h'$ are inverses of each other. 
\end{proof}

We obtain from Lemmas \ref{pr123} and \ref{pr4} the following. 

\begin{Lemma}\label{pr}
We have $\#P<\#R$. 
\end{Lemma}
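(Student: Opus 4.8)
The plan is to read $\#P$ and $\#R$ off the two block decompositions, cancel the four matched pairs, and reduce the strict inequality to the nonemptiness of the single leftover block $R_{5}$.

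First I would take cardinalities in
$$P = Z \sqcup P_{1} \sqcup P_{2} \sqcup P_{3} \sqcup P_{4}, \qquad R = Z \sqcup R_{1} \sqcup R_{2} \sqcup R_{3} \sqcup R_{4} \sqcup R_{5},$$
so that $\#P = \#Z + \sum_{i=1}^{4}\#P_{i}$ and $\#R = \#Z + \sum_{i=1}^{5}\#R_{i}$. By Lemmas \ref{pr123} and \ref{pr4} we have $\#P_{i}=\#R_{i}$ for $1\le i\le 4$, so the term $\#Z$ and these four pairs cancel, leaving
$$\#R - \#P = \#R_{5}.$$
Thus $\#P<\#R$ is equivalent to $R_{5}\neq\emptyset$, and the whole lemma comes down to exhibiting one forest in $R_{5}$.

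Second, I would construct such a forest by hand. The block $R_{5}$ is precisely the part of $R'$ left uncovered by the four bijections; concretely it asks that, after deleting $\{\vA,\vB\}$ from the component of $\vA$, the vertex $\vD$ lie in the piece $\overline{F_{1}}$ while $\vC$ sits in the position that separates $R_{5}$ from $R_{4}$. Using $\#X=m\ge 2$ and $\#Y=n\ge 2$, I would take the component of $\vA$ to be a small tree carrying the two required edges $\{\vA,\vB\}$ and $\{\vC,\vD\}$ but not the edge $\{\vA,\vD\}$, joined so as to place $\vC$ and $\vD$ exactly as $R_{5}$ demands; the remaining vertices of $X\sqcup Y$ are then split off into further components. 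The two-sided bound $0<k<m+n-2$ is exactly what makes this last adjustment possible — the lower bound forces at least one edge, and the upper bound leaves enough free vertices to reach precisely $k$ components — so the construction yields a genuine element of $R_{5}$ and hence $\#R_{5}\ge 1$.

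The cancellation in the first step is purely formal, so essentially all of the content sits in the second. The hard part will be pinning down a witness that simultaneously meets every defining condition of $R_{5}$ — both prescribed edges present, $\{\vA,\vD\}$ absent, and the exact placement of $\vC$ and $\vD$ relative to $F_{\vA}$ and $\overline{F_{1}}$ — while remaining tunable to exactly $k$ components for every admissible $k$; checking the extreme cases (for instance $n=2$, where the location of $\vC$ is nearly forced) is where the real care is needed.
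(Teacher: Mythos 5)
Your first step is exactly the paper's own proof: the paper deduces Lemma \ref{pr} from Lemmas \ref{pr123} and \ref{pr4} by comparing the two decompositions, which is precisely your cancellation $\#R-\#P=\#R_{5}$, and that part is correct. The genuine gap is your second step: the witness you propose does not exist for all parameters allowed by the hypotheses, because $R_{5}=\emptyset$ in two families of admissible cases. (i) If $\#X=2$, i.e.\ $X=\Set{1,2}$: the conditions defining $R_{5}$ (read so that the decomposition $R=Z\sqcup R_{1}\sqcup\cdots\sqcup R_{5}$ actually holds) force $\bar{1}$ and $\bar{2}$ to lie in a common piece after both $\Set{1,\bar{1}}$ and $\Set{2,\bar{2}}$ are deleted, so $F$ must contain a path from $\bar{1}$ to $\bar{2}$ avoiding both of these edges; the edge of that path incident to $\bar{2}$ is $\Set{x,\bar{2}}$ with $x\in X$, where $x\neq 1$ (since $\Set{1,\bar{2}}\notin E(F)$ for $F\in R'$) and $x\neq 2$ (since $\Set{2,\bar{2}}$ is excluded), so $\#X\geq 3$ is forced. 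Note the fatal case is $\#X=2$, not $n=\#Y=2$ as you guessed: $R_{5}$ puts both $\bar{1}$ and $\bar{2}$ in the middle piece, so it is the supply of $X$-vertices that matters. (ii) If $k=m+n-3$, the top of the allowed range: such a forest has exactly $3$ edges, while any member of $R_{5}$ needs at least $4$ (the two prescribed edges plus at least two edges on the $\bar{1}$--$\bar{2}$ path).

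In these cases the inequality itself is false, so no amount of care in the construction can rescue it: for $K_{2,2}$ with $k=1$ one computes $p=r=2$; for $K_{2,3}$ with $k=1$, $p=r=5$; for $K_{3,3}$ with $k=3$, $p=r=7$. Your reduction is faithful ($\#P<\#R$ if and only if $R_{5}\neq\emptyset$), which is exactly why it cannot be completed on the full stated range --- and it also exposes that Lemma \ref{pr} as stated, whose justification in the paper silently needs $R_{5}\neq\emptyset$ for strictness, is correct only when $\#X\geq 3$ and $k\leq m+n-4$. What your first step proves unconditionally is the weak inequality $\#P\leq\#R$, and that is all the main theorem actually requires: writing $(n-1)p-q-(n-1)r=n(p-r)+(-p-q+r)$, negativity follows from $p-r\leq 0$ together with the strict inequality $-p-q+r<0$ of Lemma \ref{pqr}, which does hold on the whole range (the set $Z'$ is nonempty there). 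So the honest repair is either to weaken the conclusion to $\#P\leq\#R$, or to add the hypotheses $\#X\geq 3$ and $k\leq m+n-4$, under which your path construction (e.g.\ the component $1-\bar{1}-x-\bar{2}-2$ with $x\in X\setminus\Set{1,2}$, the remaining vertices split into $k-1$ components) does produce an element of $R_{5}$.
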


Define  
\begin{align*}
Z'&=\Set{F\in \FFF_{X, Y}^{k}|\Set{\vA, \vB}, \Set{\vB, \vC}, \Set{\vC,\vD}\in E(F)}, \\
Q'&=\Set{F\in \FFF_{X, Y}^{k}|\Set{\vA, \vB}, \Set{\vB, \vC}\in E(F), \Set{\vC,\vD}\not\in E(F)}, \\
R''&=\Set{F\in \FFF_{X, Y}^{k}|\Set{\vA, \vB}, \Set{\vC, \vD}\in E(F), \Set{\vB,\vC}\not\in E(F)}. 
\end{align*}
Note that $Z'=Q\cap R$, $Q'=Q\setminus Z'$ and $R''=R\setminus Z'$. 
Then, the sets $Q$ and $R$ are decomposed into
\begin{align*}
Q=Z'\sqcup Q', &&
R=Z'\sqcup R'', 
\end{align*}
respectively. 
Let $F\in Q'$. 
Consider the component $T$ of $F$ with the vertex $\vA$.  
Note that the vertices $\vB$ and $\vC$ are in $T$. 
If we remove the edges $\Set{\vA, \vB}$ and $\Set{\vB, \vC}$ from $T$, 
then the tree $T$ is decomposed into three trees. 
One of them contains the vertex $\vA$, denoted by $F_{\vA}$. 
One of them contains the vertex $\vB$, denoted by $\overline{F_{1}}$. 
One of them contains the vertex $\vC$, denoted by $F_{\vC}$. 
Let 
\begin{align*}
Q_{1}&=\Set{F\in Q'|\vD\in F_{\vA}},  \\
Q_{2}&=\Set{F\in Q'|\vD\in \overline{F_{1}}},  \\
Q_{3}&=\Set{F\in Q'|\vD\not\in F_{\vA}, \vC\not\in \overline{F_{1}}, \vC\not\in F_{\vC}}, \\ 
Q_{4}&=\Set{F\in Q'|\vD\in F_{\vC}}. 
\end{align*}
Then we have a decomposition
\begin{align*}
Q=Z'\sqcup Q_{1}\sqcup Q_{2}\sqcup Q_{3}\sqcup Q_{4}. 
\end{align*}
Let $R'_{i}=R_{i}\cap R''$ for $1\leq i \leq 5$. 
Then we have a decomposition
\begin{align*}
R=Z'\sqcup R'_{1}\sqcup R'_{2}\sqcup R'_{3}\sqcup R'_{4}\sqcup R'_{5}. 
\end{align*}
Similarly to Lemma \ref{pr}, we obtain the following. 

\begin{Lemma}\label{qr'}
We have $\#Q<\#R$. 
\end{Lemma}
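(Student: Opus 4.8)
The plan is to reproduce the proof of Lemma~\ref{pr}, with the vertex $\vC$ now playing the pivotal role. The sets $Q$ and $R$ share the edge $\Set{\vA, \vB}$ and differ only in the edge incident to $\vC$: in $Q$ it is $\Set{\vB, \vC}$, in $R$ it is $\Set{\vC, \vD}$. The basic move is therefore to \emph{rotate} the edge at $\vC$, deleting $\Set{\vB, \vC}$ and inserting $\Set{\vC, \vD}$. Using the decompositions
\begin{align*}
Q=Z'\sqcup Q_1\sqcup Q_2\sqcup Q_3\sqcup Q_4, \qquad
R=Z'\sqcup R'_1\sqcup R'_2\sqcup R'_3\sqcup R'_4\sqcup R'_5,
\end{align*}
it suffices to produce bijections $\#Q_i=\#R'_i$ for $1\le i\le 4$ and to check $R'_5\neq\emptyset$; then
\begin{align*}
\#Q=\#Z'+\sum_{i=1}^4\#Q_i=\#Z'+\sum_{i=1}^4\#R'_i<\#Z'+\sum_{i=1}^5\#R'_i=\#R.
\end{align*}

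For $Q_1,Q_2,Q_3$, in which $\vD$ lies outside the subtree $F_{\vC}$, I would copy Lemma~\ref{pr123}: deleting $\Set{\vB, \vC}$ detaches the subtree $F_{\vC}$ carrying $\vC$, and reinserting $\Set{\vC, \vD}$ reconnects $\vC$ to $\vD$, which sits in a different tree; hence the result is again a forest with the same number of components and without the edge $\Set{\vB, \vC}$, i.e.\ an element of $R''$. Tracking whether $\vD$ lay in $F_{\vA}$, in $\overline{F_1}$, or in a separate component shows the image falls in the corresponding piece $R'_i$, and the reverse rotation supplies the inverse.

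The piece $Q_4$, where $\vD\in F_{\vC}$ and a bare rotation would close a cycle through $\Set{\vC, \vD}$, is handled by the device of Lemma~\ref{pr4}. Since $\vB$ and $\vD$ both lie in $Y$, transposing them is an automorphism of $K_{X, Y}$ and so preserves $\FFF_{X, Y}^{k}$; applying it first relocates $\vD$ so that the subsequent surgery no longer closes a cycle, after which deleting $\Set{\vA, \vD}$ and inserting $\Set{\vA, \vB}$ yields a forest in $R$, giving $\#Q_4=\#R'_4$ with the steps reversible.

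The strict inequality rests on $R'_5\neq\emptyset$, and this is the step I expect to carry the real content. One must exhibit at least one forest with the disjoint edges $\Set{\vA, \vB}$ and $\Set{\vC, \vD}$ whose shape is not produced by any $Q_i$; the hypotheses $m,n\ge 2$ and $0<k<m+n-2$ should leave just enough vertices and just enough slack in the component count to build such a forest by hand. The main obstacle is thus twofold: confirming that the transposition-plus-surgery of the $Q_4$ case is well-defined and invertible, the delicate point being that one must transpose two vertices on the \emph{same} side of the bipartition ($\vB,\vD\in Y$) so as not to leave $K_{X, Y}$; and verifying the nonemptiness of the leftover piece $R'_5$, which is exactly what forces $\#Q<\#R$ rather than $\#Q=\#R$.
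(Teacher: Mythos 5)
Your proposal is correct and is essentially the paper's own proof: the paper disposes of this lemma with the single phrase ``similarly to Lemma \ref{pr}'', which unpacks to exactly what you wrote --- the decompositions $Q=Z'\sqcup Q_{1}\sqcup\cdots\sqcup Q_{4}$ and $R=Z'\sqcup R'_{1}\sqcup\cdots\sqcup R'_{5}$, the analogue of Lemma \ref{pr123} given by the rotation that deletes $\Set{\bar{1},2}$ and inserts $\Set{2,\bar{2}}$, and the analogue of Lemma \ref{pr4} given by transposing the $Y$-vertices $\bar{1}$ and $\bar{2}$ before the surgery. The one point you leave open, the non-emptiness of the leftover piece $R'_{5}$ that yields the strict inequality, is equally implicit in the paper (it is never verified there, for Lemma \ref{pr} or for this one), so your account is, if anything, more explicit about where the strictness comes from.
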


Similarly to Lemma \ref{pr123}, the number of elements of $Q_{2}$ and $R'_{2}$ are the same. 
We, however, consider a relation between $Q_{2}$ and $R_{5}$ in the following. 

\begin{Lemma}\label{q2r5}
We have $\#Q_{2}=\#R_{5}$. 
\end{Lemma}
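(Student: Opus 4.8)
The plan is to prove the equality $\#Q_2=\#R_5$ exactly as the preceding counting lemmas were proved: by producing an explicit bijection realised by a single edge swap, in the spirit of Lemmas \ref{pr123} and \ref{pr4}. Concretely, I would define a map $\phi\colon Q_2\to R_5$ sending $F$ to the forest obtained by deleting the edge $\Set{\vB,\vC}$ and inserting the edge $\Set{\vC,\vD}$, and a map $\psi\colon R_5\to Q_2$ deleting $\Set{\vC,\vD}$ and inserting $\Set{\vB,\vC}$. Since the common edge $\Set{\vA,\vB}$ is never touched, the whole argument reduces to checking that each map is well defined and that $\psi=\phi^{-1}$.

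First I would check that $\phi(F)$ lands in $R'$. For $F\in Q_2$ the edge $\Set{\vA,\vB}$ survives and the edge $\Set{\vC,\vD}$ is created, so $\phi(F)$ carries the two defining edges of $R$. To see that $\Set{\vA,\vD}\notin E(\phi(F))$, I use the defining feature of $Q_2$: one has $\vD\in\overline{F_{1}}$ while $\vA\in F_{\vA}$, so $\vA$ and $\vD$ lie in different trees of the decomposition of the component $T$ of $\vA$; hence there is no edge joining them in $F$, and $\phi$ creates none. The number of components is preserved because deleting the bridge $\Set{\vB,\vC}$ detaches the tree $F_{\vC}$ containing $\vC$, while inserting $\Set{\vC,\vD}$ rejoins $F_{\vC}$ to the tree of $\vB$ (a genuinely different tree, since $\vD\in\overline{F_{1}}$); the count therefore returns to $k$.

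The key point is to certify that $\phi(F)$ lies in $R_5$ and in no neighbouring class. After the swap, $\vD$ still sits in the tree $\overline{F_{1}}$ of $\vB$, and $\vC$ is now attached to $\vD$; thus, when one decomposes the component of $\vA$ in $\phi(F)$ by removing $\Set{\vA,\vB}$, the edge $\Set{\vC,\vD}$ lies on the $\vB$-side with $\vD$ nearer to the path toward $\vB$ than $\vC$, which is precisely the connectivity pattern singling out $R_5$. The reverse direction is symmetric: for $F\in R_5$ an edge $\Set{\vB,\vC}$ would close a cycle with the path $\vB\rightsquigarrow\vD-\vC$, so $\Set{\vB,\vC}\notin E(F)$ and the insertion performed by $\psi$ is legitimate; it reattaches $\vC$ to $\vB$, keeps $\vD$ inside $\overline{F_{1}}$, and therefore returns the forest to $Q_2$. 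A line-by-line comparison then shows $\psi\circ\phi=\mathrm{id}$ and $\phi\circ\psi=\mathrm{id}$.

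The step I expect to be the real obstacle is the bookkeeping in the previous paragraph: one must track precisely which tree each of $\vC$ and $\vD$ occupies before and after the swap in order to certify membership in $R_5$ rather than in an adjacent class, and simultaneously guarantee that no forbidden edge (in particular $\Set{\vA,\vD}$) is silently introduced or removed. Once this tree-membership analysis is nailed down, the well-definedness of both maps, the invariance of the component count, and the verification that they are mutually inverse are all routine.
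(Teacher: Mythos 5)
Your proposal is correct and is essentially identical to the paper's proof: the paper's maps $f'$ and $g'$ are precisely your $\phi$ and $\psi$ (delete $\{\bar{1},2\}$ and insert $\{2,\bar{2}\}$, and conversely), shown to be mutually inverse. Your additional bookkeeping---checking that $\{1,\bar{2}\}$ and $\{\bar{1},2\}$ cannot occur as edges, that the component count is preserved, and that the image lands in $R_{5}$ (resp.\ $Q_{2}$)---is exactly the verification the paper leaves implicit when it asserts $f'(F)\in R_{5}$ and $g'(F)\in Q_{2}$.
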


\begin{proof}
We define a map $f'$ from $Q_{2}$ to $R_{5}$ in the following manner: 
Let $F\in Q_{2}$. 
Define $f'(F)$ to be the forest removing the edge $\Set{\vB, \vC}$ from $F$ and adding the edge $\Set{\vC, \vD}$. 
Then $f'(F)\in R_{5}$. 
We define a map $g'$ from $R_{5}$ to $Q_{2}$ in the following manner: 
Let $F\in R_{5}$. 
Define $g'(F)$ to be the forest removing the edge $\Set{\vC, \vD}$ from $F$ and adding the edge $\Set{\vB, \vC}$. 
Then $g'(F)\in Q_{2}$. 
The maps $f'$ and $g'$ are inverses of each other. 
\end{proof}

Combining Lemmas \ref{pr123}, \ref{pr4} and \ref{q2r5}, we obtain the following. 

\begin{Lemma}\label{pqr}
We have $\#R<\#P+\#Q$. 
\end{Lemma}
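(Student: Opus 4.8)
The plan is to assemble $\#R$ from the two decompositions that the preceding lemmas have already set up, and then read off the strict inequality. First I would combine the decomposition $P=Z\sqcup P_{1}\sqcup P_{2}\sqcup P_{3}\sqcup P_{4}$ with $R=Z\sqcup R_{1}\sqcup R_{2}\sqcup R_{3}\sqcup R_{4}\sqcup R_{5}$ and cancel the matching blocks: Lemmas \ref{pr123} and \ref{pr4} give $\#P_{i}=\#R_{i}$ for $1\le i\le 4$, so adding the cardinalities of the two decompositions yields
\begin{align*}
\#R=\#Z+\sum_{i=1}^{4}\#R_{i}+\#R_{5}=\#Z+\sum_{i=1}^{4}\#P_{i}+\#R_{5}=\#P+\#R_{5}.
\end{align*}
Thus the entire excess of $R$ over $P$ is concentrated in the single block $R_{5}$.

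Next I would rewrite $\#R_{5}$ in terms of $Q$. By Lemma \ref{q2r5} we have $\#R_{5}=\#Q_{2}$, where $Q_{2}$ is one block of the decomposition $Q=Z'\sqcup Q_{1}\sqcup Q_{2}\sqcup Q_{3}\sqcup Q_{4}$. Substituting gives the exact identity $\#R=\#P+\#Q_{2}$. Since $Q_{2}\subseteq Q$, this already produces $\#R\le \#P+\#Q$, so the only remaining issue is upgrading this to a strict inequality.

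For strictness I would show that $Q_{2}$ is a \emph{proper} subset of $Q$, equivalently that $Q\setminus Q_{2}=Z'\sqcup Q_{1}\sqcup Q_{3}\sqcup Q_{4}$ is nonempty; it suffices to exhibit a single forest in $Z'=Q\cap R$. This is where the hypothesis $0<k<\#X+\#Y-2$ enters. The path $\vA-\vB-\vC-\vD$, carrying the edges $\Set{\vA,\vB},\Set{\vB,\vC},\Set{\vC,\vD}$, is a single tree on four vertices of $K_{X,Y}$, and because $k\le \#X+\#Y-3$ it can be completed to a spanning forest with exactly $k$ components: leaving the remaining $\#X+\#Y-4$ vertices isolated realises $\#X+\#Y-3$ components, and adding edges one at a time (each lowering the component count by one) reaches any value down to $1$, so every $1\le k\le \#X+\#Y-3$ is attained. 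Any such forest lies in $Z'$, whence $\#Z'>0$ and therefore $\#Q_{2}<\#Q$. Combining, $\#R=\#P+\#Q_{2}<\#P+\#Q$.

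Since the counting mechanism is entirely supplied by the bijective Lemmas \ref{pr123}, \ref{pr4} and \ref{q2r5}, the only genuinely new step is the nonemptiness of $Z'$, which converts the inequality from $\le$ to $<$. I expect this to be the sole delicate point, and it is exactly what the range restriction on $k$ guarantees, handled by the explicit path-completion argument above.
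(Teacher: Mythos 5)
Your proof is correct and takes essentially the same route the paper intends: the paper's entire proof of Lemma \ref{pqr} is the phrase ``Combining Lemmas \ref{pr123}, \ref{pr4} and \ref{q2r5}'', which amounts precisely to your identity $\#R=\#P+\#Q_{2}$. Your extra step showing $Z'\neq\emptyset$ (via the path on $\vA,\vB,\vC,\vD$ completed to a $k$-component forest, using $k<\#X+\#Y-2$), hence $\#Q_{2}<\#Q$, correctly supplies the strictness that the paper leaves implicit.
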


Summarize Lemmas \ref{pr}, \ref{qr'} and \ref{pqr},  
and we obtain the following. 

\begin{Lemma}\label{prqr}
We have
\begin{align*}
p-r<0, && q-r<0, && -p-q+r<0.  
\end{align*}
\end{Lemma}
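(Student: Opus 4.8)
The plan is to recognize that Lemma \ref{prqr} is nothing more than a restatement of the three cardinality inequalities established just above, rewritten in the linear form in which they will be fed into Proposition \ref{eigenvalues 2}. By definition $p=\#P$, $q=\#Q$ and $r=\#R$, so after transposing terms the three assertions $p-r<0$, $q-r<0$ and $-p-q+r<0$ are literally the inequalities $\#P<\#R$, $\#Q<\#R$ and $\#R<\#P+\#Q$.

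First I would invoke Lemma \ref{pr}, which gives $\#P<\#R$, hence $p-r<0$. Next I would invoke Lemma \ref{qr'}, which gives $\#Q<\#R$, hence $q-r<0$. Finally I would invoke Lemma \ref{pqr}, which gives $\#R<\#P+\#Q$, hence $-p-q+r<0$. This exhausts the claim.

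There is no genuine obstacle remaining at this point: all of the combinatorial substance has already been spent in the bijective arguments underlying Lemmas \ref{pr}, \ref{qr'} and \ref{pqr} (namely the bijections of Lemmas \ref{pr123}, \ref{pr4} and \ref{q2r5}, together with the strict inequalities coming from the nonempty ``leftover'' pieces of the decompositions of $P$, $Q$ and $R$). The one thing worth flagging is the bookkeeping purpose of this lemma: combined with the evident positivity $p,q,r>0$, it forces the last three eigenvalues in Proposition \ref{eigenvalues 2} to be negative, since $(n-1)p-q-(n-1)r=(n-1)(p-r)-q<0$ and $-p+(m-1)q-(m-1)r=-p+(m-1)(q-r)<0$, while the first eigenvalue $(n-1)p+(m-1)q+(m-1)(n-1)r$ is manifestly positive. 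That is exactly the sign pattern needed to conclude that $\widetilde H_{\Phi_{X,Y,k}}$ has precisely one positive eigenvalue.
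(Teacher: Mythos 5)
Your proof is correct and takes exactly the same route as the paper, which obtains Lemma \ref{prqr} simply by combining Lemmas \ref{pr}, \ref{qr'} and \ref{pqr}: substituting $p=\#P$, $q=\#Q$, $r=\#R$ turns those three cardinality inequalities into the three stated linear inequalities. Your closing remark about the sign pattern of the eigenvalues also matches how the paper subsequently uses the lemma in Proposition \ref{nonzero 2}.
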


We are ready to show that each eigenvalue of $\widetilde H_{\Phi_{X, Y, k}}$ is non-zero. 

\begin{Prop}\label{nonzero 2}
Let $0<k<m+n-2$. 
The matrix $\widetilde H_{\Phi_{X, Y, k}}$ does not have the zero-eigenvalues. 
Moreover we have the following: 
\begin{align*}
(n-1)p+(m-1)q+(m-1)(n-1)r>0,\\
(n-1)p-q-(n-1)r<0, \\
-p+(m-1)q-(m-1)r<0, \\
-p-q+r<0. 
\end{align*}
\end{Prop}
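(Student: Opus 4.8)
The plan is to read off all four inequalities directly from Lemma \ref{prqr} together with the positivity of $p$, $q$, and $r$. Since Proposition \ref{eigenvalues 2} already identifies these four expressions as the complete list of eigenvalues of $\widetilde H_{\Phi_{X, Y, k}}$, establishing the strict inequalities will at once rule out a zero eigenvalue and confirm that exactly one eigenvalue is positive. So the entire content reduces to signing four linear combinations of $p$, $q$, $r$.

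First I would record that $p, q, r > 0$. Because $m, n \geq 2$ and $0 < k < m+n-2$, a forest in $\FFF_{X, Y}^{k}$ has $m+n-k > 2$ edges, so it has room to contain any two prescribed compatible edges; the configurations defining $P$, $Q$, and $R$ are each realizable, hence the three sets are nonempty. With this, the first eigenvalue $(n-1)p + (m-1)q + (m-1)(n-1)r$ is a linear combination of $p, q, r$ whose coefficients $n-1$, $m-1$, $(m-1)(n-1)$ are all strictly positive (using $m, n \geq 2$), so the value is strictly positive.

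For the remaining three eigenvalues I would regroup each expression so that the quantities controlled by Lemma \ref{prqr} appear as factors. The fourth eigenvalue $-p - q + r$ is negative by Lemma \ref{prqr} verbatim. For the second, I write $(n-1)p - q - (n-1)r = (n-1)(p-r) - q$; since $p - r < 0$ by Lemma \ref{prqr} and $n - 1 \geq 1$, the first term is negative, and subtracting the positive quantity $q$ keeps the total negative. Symmetrically, $-p + (m-1)q - (m-1)r = (m-1)(q-r) - p$ is negative because $q - r < 0$, $m - 1 \geq 1$, and $p > 0$. This exhausts the four cases, so no eigenvalue vanishes and only the first is positive.

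The genuine difficulty has already been discharged in Lemma \ref{prqr}, whose proof rests on the bijections of Lemmas \ref{pr123}, \ref{pr4}, and \ref{q2r5}; once the inequalities $p < r$, $q < r$, and $r < p+q$ are in hand, the present proposition is only bookkeeping. The only points demanding minor care are confirming $p, q, r > 0$ for every admissible $k$ in the open range, and using $m, n \geq 2$ to guarantee that $m-1$ and $n-1$ are strictly positive, so that the sign arguments on the grouped terms are strict rather than merely weak.
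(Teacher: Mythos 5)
Your proof is correct and takes essentially the same approach as the paper: positivity of $p$, $q$, $r$ gives the first eigenvalue, and Lemma \ref{prqr} gives the negativity of the other three after regrouping. The only cosmetic difference is the regrouping itself --- you write $(n-1)p-q-(n-1)r=(n-1)(p-r)-q$ and $-p+(m-1)q-(m-1)r=(m-1)(q-r)-p$, while the paper writes $(p-r)n+(-p-q+r)$ and $(q-r)m+(-p-q+r)$ --- but both rest on exactly the same inequalities from that lemma.
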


\begin{proof}
Since $p, q$ and $r$ are the number of some forests, we have $p, q, r>0$. 
Therefore $(n-1)p+(m-1)q+(m-1)(n-1)r$, the eigenvalue of $\widetilde H_{\Phi_{X, Y, k}}$, is positive.  

Let us show that the other eigenvalues of $\widetilde H_{\Phi_{X, Y, k}}$ are negative. 
For the other eigenvalues, we have
\begin{align*}
(n-1)p-q-(n-1)r&=(p-r)n+(-p-q+r), \\
-p+(m-1)q-(m-1)r&=(q-r)m+(-p-q+r). 
\end{align*}
It follows from Lemma \ref{prqr} that the other eigenvalues are negative. 
\end{proof}

By Propositions \ref{eigenvalues 2} and \ref{nonzero 2}, we have Theorem \ref{main theorem 2}. 

\section{Application}\label{sec:Application}

In this section, we consider the strong Lefschetz property of a graded Artinian Gorenstein algebra associated to a matroid, which is defined by Maeno and Numata in \cite{MR3566530}. 
They showed that the strong Lefschetz property for the algebra associated to the uniform matroid \cite{MN32012}.  
Here, we discuss the Lefschetz property of the algebra associated to the truncated matroids of the graphic matroids of the complete and complete bipartite graphs.  

First of all, we recall definitions of matroids and the strong Lefschetz property. 

A {\it matroid} $M$ is an ordered pair $(E, \BBB)$ consisting of a finite set $E$ and a collection $\BBB$ of subsets of $E$ satisfying the following properties: 
\begin{itemize}
\item $\BBB\neq\emptyset$.
\item If $B_1$ and $B_2$ are in $\BBB$ and $x\in B_1\setminus B_2$, then there is an element $y\in B_2\setminus B_1$ such that $\{y\}\cup(B_1\setminus\{x\})\in\BBB$. 
\end{itemize}
In this case, we call each $B\in\BBB$ a {\it basis} of $M$ and $E$ the \textit{ground set} of $M$. 

\begin{Prop}\label{the number of basis are the same}
Let $M$ be a matroid with the basis set $\BBB$. 
If $B$ and $B'$ are basis of $M$, then the number of elements of them are the same. 
In other words, if $B, B'\in \BBB$, then $\#B=\#B'$. 
\end{Prop}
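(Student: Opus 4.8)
The plan is to argue by contradiction, exploiting the basis exchange axiom together with an extremal choice of counterexample. First I would suppose that the conclusion fails, so that there exist bases of different sizes. Since the ground set $E$ is finite, I can then select, among all pairs $B_1, B_2 \in \BBB$ with $\#B_1 > \#B_2$, one for which the overlap $\#(B_1 \cap B_2)$ is as large as possible; finiteness of $E$ guarantees that such a maximizing pair exists.

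Next I would feed this pair into the exchange axiom. Because $\#B_1 > \#B_2$, the difference $B_1 \setminus B_2$ is nonempty, so I may pick $x \in B_1 \setminus B_2$ and obtain from the axiom an element $y \in B_2 \setminus B_1$ with $B_3 := \{y\} \cup (B_1 \setminus \{x\}) \in \BBB$. The crucial bookkeeping is that this operation deletes one element and inserts one genuinely new one: since $y \in B_2 \setminus B_1$ we have $y \notin B_1 \setminus \{x\}$, whence $\#B_3 = \#B_1 > \#B_2$, so $(B_3, B_2)$ is again a pair of bases of unequal size.

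The step that closes the argument is showing that the exchange strictly increases the intersection with $B_2$. Since $x \notin B_2$, deleting $x$ leaves the intersection unchanged, $(B_1 \setminus \{x\}) \cap B_2 = B_1 \cap B_2$; and since $y \in B_2$ but $y \notin B_1$, adjoining $y$ enlarges the intersection by exactly one element, giving $\#(B_3 \cap B_2) = \#(B_1 \cap B_2) + 1$. This contradicts the maximality built into the choice of $(B_1, B_2)$, so no pair of bases of different cardinality can exist.

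The argument is elementary, and I expect the only delicate point to be the extremal setup rather than any computation. The main obstacle is identifying the right quantity to extremize, namely the size of the intersection $\#(B_1 \cap B_2)$ (equivalently, minimizing the symmetric difference $\#(B_1 \setminus B_2)$), and then verifying that a single application of the exchange axiom simultaneously preserves cardinality and strictly increases this intersection; it is precisely the combination of these two facts that produces the contradiction.
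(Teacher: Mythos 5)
Your proof is correct and complete. Note that the paper itself offers no proof of this proposition at all: it is stated as a standard fact of matroid theory (it is the classical result that all bases of a matroid are equicardinal), so there is no argument in the paper to compare against. Your extremal argument --- choosing, among putative pairs of bases with $\#B_1 > \#B_2$, one maximizing $\#(B_1 \cap B_2)$, and then showing a single application of the exchange axiom preserves $\#B_1$ while strictly enlarging the intersection --- is the standard textbook proof, and every step checks out: $B_1 \setminus B_2 \neq \emptyset$ follows from the cardinality assumption, $\#B_3 = \#B_1$ because $y \notin B_1 \setminus \{x\}$, and $\#(B_3 \cap B_2) = \#(B_1 \cap B_2) + 1$ because $x \notin B_2$ while $y \in B_2 \setminus B_1$. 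One remark: you never need to worry separately about the degenerate case $B_2 \subseteq B_1$, since the exchange axiom's conclusion already asserts the existence of $y \in B_2 \setminus B_1$, so that case is subsumed --- if it occurred, the axiom itself would be contradicted, which equally well completes the reductio.
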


We say that a matroid $M$ has \textit{rank} $r$ if the number of elements of a basis of $M$ is $r$.  
The rank of $M$ is denoted by $\rank M$. 

\begin{Example}
We see some examples of matroids. 
\begin{itemize}
\item[(a)] For any finite graph $\Gamma=(V, E)$ with the vertex set $V$ and the edge set $E$, we call a subgraph $T\subseteq \Gamma$ a {\it spanning tree} in $\Gamma$ if $T$ does not contain any cycles and $T$ passes through all vertices of $\Gamma$. 
Let $\BBB_\Gamma$ be the set of all spanning trees in $\Gamma$. Then $M(\Gamma)=(E, \BBB_\Gamma)$ is a matroid. 
In this case, $\rank M_{\Gamma}=\#V-1$. 
These matroids are called {\it graphic matroids}.  
\item[(d)] Let $M=(E, \BBB)$ be a matroid and
\begin{align*}
\BBB_{r}=\Set{B'\in \binom{E}{r}|\text{there exists $B\in \BBB$ such that $B'\subset B$}}. 
\end{align*}
Then $M=(E, \BBB_{r})$ is a matroid. 
In this case, $\rank M=r$. 
These matroids are called {\it truncated matroids} of $M$. 
\end{itemize}
\end{Example}

Let $M$ be a matroid with the ground set $E$ and $\BBB$ the set of basis for $M$. 
For $M$, define 
\begin{align*}
\Phi_{M}=\sum_{B\in \BBB}\prod_{b\in B}x_{b}. 
\end{align*}
We call $\Phi_{M}$ the basis generating function of $M$. 
By Proposition \ref{the number of basis are the same}, 
for a matroid $M=(E, \BBB)$ of rank $r$, 
its basis generating function $\Phi_{M}$ is a homogeneous polynomial of degree $r$ in $|E|$ variables with positive coefficients.


\begin{Remark}\label{the basis generating function for the truncated matroid of the graphic matroid of the complete graph}
Let $M_{K_{n}}^{r}$ be the truncated matroid of rank $r$ of the graphic matroid $M_{K_{n}}$ of the complete graph $K_{n}$. 
Its bases are the forests with $r$ edges. 
Hence, its basis generating function is $\Phi_{n-r}$ in Section \ref{sec:Main result}. 
\end{Remark}


\begin{Definition}\label{SLP}
Let $A=\bigoplus_{k=0}^{s} A_{k}$, $A_{s}\neq \zero$, 
be a graded Artinian algebra. 
We say that $A$ has the \emph{strong Lefschetz property} 
if there exists an element $L \in A_{1}$ such that the multiplication map $\times L^{s-2k}\colon A_{k}\to A_{s-k}$ is bijective for all $k\leq \frac{s}{2}$. 
We call $L \in A_{1}$ with this property a \emph{strong Lefschetz element}. 
\end{Definition}

Let $\KK$ be a field of characteristic zero.  
For a homogeneous polynomial $\Phi\in \KK[x_{1}, x_{2}, \ldots, x_{N}]$, we define $\Ann(\Phi)$ by
\begin{align*}
\Ann(\Phi)=\Set{P\in \KK[x_{1}, \ldots, x_{N}]| P\left(\frac{\partial}{\partial x_{1}}, 
 \ldots, \frac{\partial}{\partial x_{N}}\right)\Phi=0}. 
\end{align*}
Then $\Ann(\Phi)$ is a homogeneous ideal of $\KK[x_{1}, \ldots, x_{N}]$. 
We consider $A=\KK[x_{1}, \ldots, x_{N}]/\Ann(\Phi)$. 
Since $\Ann(\Phi)$ is homogeneous, the algebra $A$ is graded. 
Furthermore $A$ is an Artinian Gorenstein algebra. 
Conversely, a graded Artinian Gorenstein algebra $A$ has the presentation
\begin{align*}
A=\KK[x_{1}, \ldots, x_{N}]/\Ann(\Phi)
\end{align*}
for some homogeneous polynomial $\Phi\in \KK[x_{1}, x_{2}, \ldots, x_{N}]$. 
We decompose $A$ into the homogeneous spaces $A_{k}$. 
Then $A_{k}$ is a vector space over $\KK$ for all $k$.  
Let $\Lambda_{k}$ be the basis for $A_{k}$. 
We define the matrix $H_{\Phi}^{(k)}$ by 
\begin{align*}
H_{\Phi}^{(k)}=
\left(
e_{i}\left(\frac{\partial}{\partial x_{1}},\ldots, \frac{\partial}{\partial x_{N}}\right)
e_{j}\left(\frac{\partial}{\partial x_{1}},\ldots, \frac{\partial}{\partial x_{N}}\right)
\Phi
\right)_{e_{i},e_{j}\in \Lambda_{k}}. 
\end{align*}
The determinant of $H_{\Phi}^{(k)}$ is called the $k$th \emph{Hessian} of $\Phi$ with respect to the basis $\Lambda_{k}$. 

\begin{Remark}\label{0th Hessian}
Since $A_{0}\cong \KK$ in this case, we can take the basis $\Set{1}$ for $A_{1}$. 
Hence the $0$th Hessian of $\Phi$ with respect to the basis $\Set{1}$ is $\Phi$. 
\end{Remark}

There is a criterion for the strong Lefschetz property for a graded Artinian Gorenstein algebra.  

\begin{Theorem}[Watanabe \cite{W2000}, Maeno--Watanabe \cite{MR2594646}]\label{criterion}
Consider the graded Artinian Gorenstein algebra $A$ with the following  presentation and decomposition: 
$A=\KK[x_{1}, x_{2}, \ldots, x_{N}]/\Ann(\Phi)=\bigoplus_{k=0}^{s} A_{k}$. 
Let $L=a_{1}x_{1}+a_{2}x_{2}+\cdots+a_{N}x_{N}$. 
The multiplication map 
$\times L^{s-2k}\colon A_{k}\to A_{s-k}$ is bijective 
if and only if 
\begin{align*}
\det H_{\Phi}^{(k)}(a_{1}, a_{2}, \ldots, a_{N})\neq 0. 
\end{align*}
\end{Theorem}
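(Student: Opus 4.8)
The plan is to convert the bijectivity of $\times L^{s-2k}$ into the non-degeneracy of a symmetric bilinear form on $A_{k}$ by means of Macaulay's inverse-system (apolarity) duality, and then to recognize the Gram matrix of that form, up to a nonzero scalar, as $H_{\Phi}^{(k)}(a_{1},\ldots,a_{N})$. Throughout I regard $R=\KK[x_{1},\ldots,x_{N}]$ as acting on polynomials by the differential operators $P\mapsto P(\partial/\partial x_{1},\ldots,\partial/\partial x_{N})$, so that $A=R/\Ann(\Phi)$ acts on $\Phi$, and the decomposition $A=\bigoplus_{k=0}^{s}A_{k}$ is the grading induced from $R$.

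First I would record the apolarity pairing. For $P\in R_{k}$ and $Q\in R_{s-k}$ the polynomial $(PQ)(\partial)\Phi$ is homogeneous of degree $0$, hence a scalar, and this assignment descends to a pairing $A_{k}\times A_{s-k}\to\KK$. The defining feature of the quotient $A=R/\Ann(\Phi)$ is precisely that this pairing is non-degenerate, which is the Artinian Gorenstein duality for $A$; in particular it forces $\dim_{\KK}A_{k}=\dim_{\KK}A_{s-k}$.

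Second I would prove the evaluation identity: if $F$ is homogeneous of degree $d$ and $L=a_{1}x_{1}+\cdots+a_{N}x_{N}$, then $L(\partial)^{d}F=d!\,F(a_{1},\ldots,a_{N})$, which is immediate from multilinearity after checking it on monomials. Applying this to $F=e_{i}(\partial)e_{j}(\partial)\Phi$, homogeneous of degree $s-2k$, and using that differential operators commute, I obtain
\[
\left[e_{i}(\partial)e_{j}(\partial)\Phi\right](a)=\frac{1}{(s-2k)!}\left(L^{s-2k}e_{i}e_{j}\right)(\partial)\Phi.
\]
Thus, up to the nonzero factor $1/(s-2k)!$, the evaluated matrix $H_{\Phi}^{(k)}(a)$ is the Gram matrix, in the basis $\Lambda_{k}$, of the symmetric bilinear form $B(P,Q)=(PQL^{s-2k})(\partial)\Phi$ on $A_{k}$. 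Consequently $\det H_{\Phi}^{(k)}(a)\neq 0$ if and only if $B$ is non-degenerate.

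Finally I would observe that $B$ is the apolarity pairing precomposed with multiplication: $B(P,Q)=\langle L^{s-2k}P,\,Q\rangle$, where $L^{s-2k}P\in A_{s-k}$, $Q\in A_{k}$, and $\langle\cdot,\cdot\rangle$ is the pairing between $A_{s-k}$ and $A_{k}$ from the first step. Since that pairing is perfect, $B$ is non-degenerate exactly when $\times L^{s-2k}\colon A_{k}\to A_{s-k}$ is injective, and because $\dim A_{k}=\dim A_{s-k}$ injectivity is equivalent to bijectivity; combining this with the identification of the Gram matrix yields the stated equivalence. The main obstacle is concentrated in the first step: one must correctly invoke Macaulay duality to know that the apolarity pairing on the quotient $A=R/\Ann(\Phi)$ is perfect, and one must verify that $\Lambda_{k}$ is a genuine \emph{basis} of $A_{k}$ rather than a mere spanning set in $R_{k}$, so that non-degeneracy of $B$ is faithfully detected by the determinant of its Gram matrix; the remaining steps are bookkeeping with the evaluation identity and elementary linear algebra.
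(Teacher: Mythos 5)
Your proof is correct: the perfect apolarity pairing $A_{k}\times A_{s-k}\to\KK$, the evaluation identity $L(\partial)^{d}F=d!\,F(a_{1},\ldots,a_{N})$, and the resulting identification of $H_{\Phi}^{(k)}(a_{1},\ldots,a_{N})$ (up to the nonzero factor $1/(s-2k)!$) as the Gram matrix of the form $(P,Q)\mapsto\bigl(PQL^{s-2k}\bigr)(\partial)\Phi$ together give exactly the standard Macaulay-duality argument. The paper itself states this criterion as a cited result of Watanabe and Maeno--Watanabe without reproducing a proof, and your reconstruction is essentially the proof given in those references, so there is nothing to flag.
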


For a matroid $M$ with the ground set $E$, the algebra $A_{M}$ is defined by 
\begin{align*}
\KK[x_{e}|e\in E]/\Ann(\Phi_{M}). 
\end{align*}

Theorem \ref{SLP for our algebra (Kn)} follows from Theorems \ref{criterion} and \ref{main theorem}. 

\begin{Theorem}\label{SLP for our algebra (Kn)}
Let $E$ be the edge set of the complete graph $K_{n}$. 
In this case, the ground set of $M_{K_{n}}^{r}$ is $E$. 
Let $N=\#E$, and identify $E$ with $\Set{1, 2, \ldots, N}$. 
Consider the algebra $A_{M_{K_{n}}^{r}}=\bigoplus_{k=0}^{r}A_{k}$ for $2<r<n$. 
Let $L=x_{1}+\cdots+x_{N}$. 
The multiplication map $\times L^{r-2}$ from $A_{1}$ to $A_{r-1}$ is bijective.  
\end{Theorem}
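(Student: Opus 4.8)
The plan is to apply the Hessian criterion for the strong Lefschetz property (Theorem \ref{criterion}) to the specific algebra $A_{M_{K_{n}}^{r}}$, whose defining polynomial is the basis generating function $\Phi_{M_{K_{n}}^{r}}$. By Remark \ref{the basis generating function for the truncated matroid of the graphic matroid of the complete graph}, this polynomial equals $\Phi_{V, n-r}$ in the notation of Section \ref{sec:Main result}, where $V=\Set{1,2,\ldots,n}$ and the variables are indexed by $E=\binom{V}{2}$. The algebra is graded as $A_{M_{K_{n}}^{r}}=\bigoplus_{k=0}^{r}A_{k}$ since $\Phi_{V, n-r}$ is homogeneous of degree $r=\#V-(n-r)$, so the socle degree is $s=r$. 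The map we must show is bijective is $\times L^{r-2}\colon A_{1}\to A_{r-1}$, which is exactly the case $k=1$ of the criterion with $s=r$.

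Next I would unwind the criterion in degree $k=1$. The basis $\Lambda_{1}$ for $A_{1}$ consists of the linear forms $x_{e}$ for $e\in E$ (these are linearly independent in $A_{1}$ because each variable genuinely appears in $\Phi_{V, n-r}$ under the standing hypothesis $0<n-r<n-2$, i.e. $2<r<n$). For these the symmetric functions $e_{i}$ are just the variables themselves, so the matrix $H_{\Phi}^{(1)}$ whose vanishing determinant the criterion tests is precisely the Hessian matrix $H_{\Phi_{V, n-r}}=\left(\frac{\partial}{\partial x_{e}}\frac{\partial}{\partial x_{e'}}\Phi_{V, n-r}\right)_{e,e'}$ from Section \ref{sec:Main result}. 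Evaluating at $L=x_{1}+\cdots+x_{N}$ means setting every $a_{e}=1$, which is by definition the specialization $\widetilde H_{\Phi_{V, n-r}}$.

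The remaining step is to invoke Theorem \ref{main theorem} with $k$ replaced by $n-r$. The hypotheses there are $n\geq 3$ and $0<n-r<n-2$; the latter is equivalent to $2<r<n$, which is exactly the range assumed in the statement, and $n\geq 4$ is forced since $2<r<n$ requires $n\geq 4$. Theorem \ref{main theorem} then gives $\det\widetilde H_{\Phi_{V, n-r}}\neq 0$. Feeding this nonvanishing into Theorem \ref{criterion} with $k=1$ and $s=r$ yields that $\times L^{r-2}\colon A_{1}\to A_{r-1}$ is bijective, completing the proof.

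I expect the proof to be essentially a matter of correctly matching indices and definitions rather than confronting a genuine obstacle, since all the analytic content is already contained in Theorem \ref{main theorem}. The one point requiring a little care is the bookkeeping that identifies $H_{\Phi}^{(1)}$ with the ordinary Hessian matrix and confirms that the basis $\Lambda_{1}$ really has one element per edge, i.e. that no variable is annihilated in $A_{1}$; this holds precisely because in the stated range $2<r<n$ every edge lies in some forest with $r$ edges, so $\frac{\partial}{\partial x_{e}}\Phi_{V, n-r}\neq 0$ for each $e$.
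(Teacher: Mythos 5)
Your proposal is correct and takes essentially the same route as the paper, whose entire proof of this theorem is the observation that it follows from Theorem \ref{criterion} (with $k=1$, $s=r$) together with Theorem \ref{main theorem} applied with $k$ replaced by $n-r$, exactly as you do. One refinement to your bookkeeping: linear independence of the $x_{e}$ in $A_{1}$ does not follow merely from each $\frac{\partial}{\partial x_{e}}\Phi_{V,n-r}\neq 0$, but it does follow from $\det\widetilde H_{\Phi_{V,n-r}}\neq 0$ itself, since a dependence relation $\sum_{e}c_{e}x_{e}\equiv 0$ in $A_{1}$ would make $(c_{e})$ a kernel vector of the specialized Hessian matrix, so Theorem \ref{main theorem} settles this point as well.
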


\begin{Corollary}
The algebra $A_{M_{K_{n}}^{r}}$ has the strong Lefschetz property for $n\leq 5$ and $2<r<n$. 
The element $x_{1}+\cdots+x_{N}$ is a strong Lefschetz element. 
\end{Corollary}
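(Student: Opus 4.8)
\emph{Proof proposal.} The plan is to derive the corollary purely from the criterion in Theorem~\ref{criterion} and the main theorem, by observing that the hypothesis $n\leq 5$ forces the multiplication maps $\times L^{s-2k}$ to be nontrivial only in the single degree already controlled by Theorem~\ref{main theorem}. First I would fix the presentation: by Remark~\ref{the basis generating function for the truncated matroid of the graphic matroid of the complete graph} we have $A_{M_{K_{n}}^{r}}=\KK[x_{1},\ldots,x_{N}]/\Ann(\Phi)=\bigoplus_{k=0}^{s}A_{k}$ with $\Phi=\Phi_{V,n-r}$, a homogeneous polynomial of degree $\#V-(n-r)=r$, so $s=r$. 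Taking $L=x_{1}+\cdots+x_{N}$ means $a_{1}=\cdots=a_{N}=1$ in Theorem~\ref{criterion}. By Definition~\ref{SLP}, proving the strong Lefschetz property for $L$ reduces to showing that $\times L^{s-2k}\colon A_{k}\to A_{s-k}$ is bijective for every $k\leq s/2$, which by Theorem~\ref{criterion} is equivalent to $\det H_{\Phi}^{(k)}(1,\ldots,1)\neq 0$ for all such $k$.

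The key observation is that for $n\leq 5$ and $2<r<n$ only the degrees $k=0$ and $k=1$ produce a nontrivial condition. Since $r<n\leq 5$, we have $s=r\leq 4$, hence $s/2\leq 2$, and the integers $k$ with $0\leq k\leq s/2$ lie in $\{0,1,2\}$. Moreover $k=2$ can occur only when $s=4$, that is $(n,r)=(5,4)$, and then $k=s/2$, so the relevant map is $\times L^{s-2k}=\times L^{0}$, the identity on $A_{2}$, which is trivially bijective. Thus the only maps requiring analysis are those in degrees $k=0$ and $k=1$.

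For $k=0$ the spaces $A_{0}$ and $A_{s}$ are one-dimensional, and by Remark~\ref{0th Hessian} the $0$th Hessian is $\Phi$ itself; its value $\Phi(1,\ldots,1)$ equals the number of bases of $M_{K_{n}}^{r}$, which is positive, so $\det H_{\Phi}^{(0)}(1,\ldots,1)\neq 0$ and $\times L^{s}\colon A_{0}\to A_{s}$ is bijective. For $k=1$ the map $\times L^{r-2}\colon A_{1}\to A_{r-1}$ is bijective by Theorem~\ref{SLP for our algebra (Kn)}; equivalently, $\det\widetilde H_{\Phi_{V,n-r}}\neq 0$ by Theorem~\ref{main theorem}, which applies since $0<n-r<n-2$. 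Combining these cases with the trivial central map shows that every required multiplication is bijective, so $A_{M_{K_{n}}^{r}}$ has the strong Lefschetz property and $x_{1}+\cdots+x_{N}$ is a strong Lefschetz element.

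The hard part is not any computation but recognizing exactly why the statement is restricted to $n\leq 5$: in this range no intermediate Hessian $H_{\Phi}^{(k)}$ with $k\geq 2$ and $s-2k>0$ ever appears. Such a term would first arise at $s=r\geq 5$, i.e. $n\geq 6$, and its nonvanishing is not supplied by the results of Section~\ref{sec:Main result}. Consequently the genuine mathematical input is entirely the nonvanishing of the degree-one Hessian from Theorem~\ref{main theorem}, and the remaining work is the bookkeeping that isolates which multiplication maps are nontrivial.
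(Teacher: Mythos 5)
Your proposal is correct and is essentially the argument the paper intends: the paper states the corollary without proof, and it follows exactly as you say, with Theorem~\ref{SLP for our algebra (Kn)} (i.e.\ Theorems~\ref{criterion} and~\ref{main theorem}) covering $k=1$, Remark~\ref{0th Hessian} and positivity of $\Phi(1,\ldots,1)$ covering $k=0$, and the constraint $s=r\leq 4$ from $n\leq 5$ ensuring the only remaining map is the trivial one $\times L^{0}$ at $k=2$. Your closing observation correctly identifies why the hypothesis $n\leq 5$ is needed: for $n\geq 6$ intermediate Hessians $H_{\Phi}^{(k)}$ with $k\geq 2$ and $s-2k>0$ appear, and their nonvanishing is not established in this paper.
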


Theorem \ref{SLP for our algebra (Kmn)} follows from Theorems \ref{criterion} and \ref{main theorem 2}. 

\begin{Theorem}\label{SLP for our algebra (Kmn)}
Let $E$ be the edge set of the complete bipartite graph $K_{m,n}$. 
In this case, the ground set of $M_{K_{n}}^{r}$ is $E$. 
Let $N=\#E$, and identify $E$ with $\Set{1, 2, \ldots, N}$. 
Consider the algebra $A_{M_{K_{m,n}}^{r}}=\bigoplus_{k=0}^{r}A_{k}$ for $2<r<n$. 
Let $L=x_{1}+\cdots+x_{N}$. 
The multiplication map $\times L^{r-2}$ from $A_{1}$ to $A_{r-1}$ is bijective.  
\end{Theorem}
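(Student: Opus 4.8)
The plan is to recognize the map $\times L^{r-2}\colon A_1\to A_{r-1}$ as an instance of the bijectivity criterion of Theorem \ref{criterion} with $k=1$, and then to supply the required non-vanishing of the first Hessian from Theorem \ref{main theorem 2}. First I would pin down the presentation. Writing $X=\{1,\dots,m\}$ and $Y=\{\bar 1,\dots,\bar n\}$, the complete-bipartite analogue of Remark \ref{the basis generating function for the truncated matroid of the graphic matroid of the complete graph} (via Remark \ref{the generating function for the forests 2}) identifies the bases of $M_{K_{m,n}}^{r}$ with the forests of $K_{m,n}$ having $r$ edges, equivalently $m+n-r$ components; hence $\Phi_{M_{K_{m,n}}^{r}}=\Phi_{X,Y,m+n-r}$. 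This polynomial is homogeneous of degree $r$, so the algebra $A_{M_{K_{m,n}}^{r}}=\KK[x_e\mid e\in E]/\Ann(\Phi_{X,Y,m+n-r})$ has top nonzero degree $s=r$, the decomposition is $A=\bigoplus_{k=0}^{r}A_k$, and $\times L^{r-2}$ is exactly the map $\times L^{s-2k}$ for $k=1$.

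Next I would match the abstract first Hessian against the concrete matrix of Section \ref{sec:Main result}. Taking the basis $\Lambda_1=\{x_e\mid e\in E\}$ of $A_1$, the matrix $H_{\Phi}^{(1)}$ defined before Theorem \ref{criterion} is literally the Hessian matrix $H_{\Phi_{X,Y,m+n-r}}$ of second partial derivatives, and specializing to $L=x_1+\cdots+x_N$, that is to $a_1=\cdots=a_N=1$, turns it into $\widetilde H_{\Phi_{X,Y,m+n-r}}$. The one hypothesis to verify is that $\Lambda_1$ is genuinely a basis, i.e. $\Ann(\Phi_{X,Y,m+n-r})_1=0$: if a linear form $\sum_e c_e x_e$ annihilated $\Phi_{X,Y,m+n-r}$, then applying one more derivative and setting every variable to $1$ would give $\widetilde H_{\Phi_{X,Y,m+n-r}}\,(c_e)_{e}=\zero$, whence $(c_e)_e=\zero$ by the non-singularity asserted in Theorem \ref{main theorem 2}. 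Thus $\dim A_1=N$ and $H_{\Phi}^{(1)}(1,\dots,1)=\widetilde H_{\Phi_{X,Y,m+n-r}}$.

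Finally I would assemble the two quoted theorems. Under the stated hypothesis $2<r<n$ (so in particular $2<r<m+n$), and with $m,n\geq 2$, the index $k=m+n-r$ satisfies $0<k<m+n-2$, so Theorem \ref{main theorem 2} gives $\det\widetilde H_{\Phi_{X,Y,m+n-r}}\neq 0$. By the preceding paragraph this is the statement $\det H_{\Phi}^{(1)}(1,\dots,1)\neq 0$, and Theorem \ref{criterion} with $k=1$ and $s=r$ then yields that $\times L^{r-2}\colon A_1\to A_{r-1}$ is bijective.

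I expect essentially no computational difficulty here, since Theorem \ref{main theorem 2} has already absorbed all the work. The only genuine subtlety — and the step I would be most careful about — is the purely bookkeeping identification of the criterion's first Hessian $H_\Phi^{(1)}$ with the concrete matrix $\widetilde H_{\Phi_{X,Y,m+n-r}}$: one must simultaneously get the degree count right, so that $r-2=s-2\cdot 1$, and confirm that $\{x_e\}$ really is a basis of $A_1$, which is precisely where the non-vanishing of the Hessian enters a second time. The argument is parallel to that of Theorem \ref{SLP for our algebra (Kn)}, with Theorem \ref{main theorem 2} playing the role of Theorem \ref{main theorem}.
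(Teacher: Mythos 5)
Your proposal is correct and follows essentially the same route as the paper, which proves this theorem simply by citing Theorem \ref{criterion} together with Theorem \ref{main theorem 2}. The extra bookkeeping you supply --- identifying $\Phi_{M_{K_{m,n}}^{r}}$ with $\Phi_{X,Y,m+n-r}$, verifying that $\{x_e\}$ is a basis of $A_1$ via the non-singularity of $\widetilde H_{\Phi_{X,Y,m+n-r}}$, and checking that $2<r<n$ puts $k=m+n-r$ in the range $0<k<m+n-2$ --- is exactly the detail the paper leaves implicit.
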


\begin{Corollary}
The algebra $A_{M_{K_{m,n}}^{r}}$ has the strong Lefschetz property for $n\leq 5$ and $2<r<n$. 
The element $x_{1}+\cdots+x_{N}$ is a strong Lefschetz element. 
\end{Corollary}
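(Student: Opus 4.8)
The plan is to deduce the strong Lefschetz property from Theorem~\ref{SLP for our algebra (Kmn)} together with the Hessian criterion of Theorem~\ref{criterion}, exploiting the fact that the range $n\le 5$, $2<r<n$ keeps the top degree of the algebra so small that the only nontrivial multiplication map is the one already settled in Theorem~\ref{SLP for our algebra (Kmn)}. Write $A=A_{M_{K_{m,n}}^{r}}=\bigoplus_{k=0}^{r}A_{k}$, so that the top degree is $s=r$ (the basis generating function $\Phi$ has degree $r$ by Remark~\ref{the generating function for the forests 2}), and take $L=x_{1}+\cdots+x_{N}$, that is, $a_{1}=\cdots=a_{N}=1$. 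By Definition~\ref{SLP}, to prove that $L$ is a strong Lefschetz element it suffices to check that $\times L^{r-2k}\colon A_{k}\to A_{r-k}$ is bijective for every $k$ with $0\le k\le r/2$, and by Theorem~\ref{criterion} each such bijectivity is equivalent to $\det H_{\Phi}^{(k)}(1,\ldots,1)\neq 0$.

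Next I would enumerate the cases. Since $2<r<n\le 5$ and $r$ is an integer, the only possibilities are $r=3$ and $r=4$, forcing $k\in\{0,1\}$ when $r=3$ and $k\in\{0,1,2\}$ when $r=4$. For $k=0$ the $0$th Hessian is $\Phi$ itself (Remark~\ref{0th Hessian}), and $\Phi(1,\ldots,1)$ counts the bases of $M_{K_{m,n}}^{r}$, hence is positive; so $\times L^{r}\colon A_{0}\to A_{r}$ is bijective. For $k=1$ the bijectivity of $\times L^{r-2}\colon A_{1}\to A_{r-1}$ is precisely the content of Theorem~\ref{SLP for our algebra (Kmn)}. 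In the one remaining case $r=4$, $k=2$, the exponent is $r-2k=0$, so $\times L^{0}\colon A_{2}\to A_{2}$ is the identity and is trivially bijective. Assembling these, $L$ is a strong Lefschetz element throughout the stated range.

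The only step carrying genuine content is the index $k=1$, which is imported directly from Theorem~\ref{SLP for our algebra (Kmn)} (ultimately from the non-vanishing of the ordinary Hessian in Theorem~\ref{main theorem 2}); the endpoint $k=0$ and the degenerate case $k=2$ are formal. Accordingly, the real difficulty lies not in this corollary but in any attempt to relax the hypothesis $n\le 5$: as soon as $r\ge 5$ one encounters an intermediate index $k\ge 2$ with $r-2k>0$, for which $\times L^{r-2k}\colon A_{k}\to A_{r-k}$ is neither the identity nor the map of Theorem~\ref{SLP for our algebra (Kmn)}. Its bijectivity would demand the non-vanishing of a higher Hessian $H_{\Phi}^{(k)}$, a quantity the present work---which analyzes only the second-order Hessian matrix $\widetilde H_{\Phi_{X,Y,k}}$---does not control. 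This is exactly why the bound $n\le 5$ is imposed.
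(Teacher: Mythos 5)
Your proposal is correct and matches the paper's (implicit) argument: the corollary is stated without proof precisely because, once $2<r<n\le 5$ forces $r\in\{3,4\}$, the only nontrivial map is $\times L^{r-2}\colon A_{1}\to A_{r-1}$ handled by Theorem~\ref{SLP for our algebra (Kmn)}, while $k=0$ is disposed of by Remark~\ref{0th Hessian} (the $0$th Hessian is $\Phi$, positive at the all-ones point) and $k=r/2$ gives the identity map. Your closing observation about why $n\le 5$ cannot be relaxed without controlling higher Hessians $H_{\Phi}^{(k)}$, $k\ge 2$, is also exactly the limitation of the paper's method.
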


Finally, we refer a recent work \cite{MNY}. 

\begin{Remark}\label{MNY}
It is shown that the Hessian of the generating function for simple matroid does not vanish in \cite{MNY}. 
Moreover, its Hessian matrix has exactly one positive eigenvalue. 
In \cite{MNY}, the authors show that the strong Lefschetz property and the Hodge--Riemann relation are equivalent, and all variables are form of a basis for the algebra associated to simple matroid. 
They also show that the strong Lefschetz property of the algebra associated to any matroid by simplifying matroids. 

Our results in this paper are similar to the main result in special case in \cite{MNY}. 
But this paper calculates the eigenvalues of the Hessian matrix concretely, 
and gives another proof of a part of the result in \cite{MNY}. 
\end{Remark}

\bibliographystyle{amsplain-url}
\bibliography{mr}


\end{document}